\documentclass[11pt,english]{article}
\usepackage[T1]{fontenc}
\usepackage{textcomp}
\usepackage[latin9]{inputenc}
\usepackage{refstyle}
\usepackage{amsmath}
\usepackage{amsthm}
\usepackage{amsfonts}
\usepackage{amssymb}
\usepackage{csquotes}
\usepackage{setspace}
\makeatletter
\ifx\proof\undefined
\newenvironment{proof}[1][\protect\proofname]{\par
	\normalfont\topsep6\p@\@plus6\p@\relax
	\trivlist
	\itemindent\parindent
	\item[\hskip\labelsep\scshape #1]\ignorespaces
}{%
	\endtrivlist\@endpefalse
}
\providecommand{\proofname}{Proof}
\fi

\makeatother

\theoremstyle{plain}
\newtheorem{thm}{\protect\theoremname}
\theoremstyle{definition}
\newtheorem{defn}[thm]{\protect\definitionname}
\theoremstyle{plain}
\newtheorem{prop}[thm]{\protect\propositionname}
\theoremstyle{remark}
\newtheorem{rem}[thm]{\protect\remarkname}
\theoremstyle{plain}
\newtheorem{lem}[thm]{\protect\lemmaname}
\usepackage{babel}
\usepackage[style=authoryear]{biblatex}
\newref{thm}{
        name      = \RSthmtxt,
        names     = \RSthmstxt,
        Name      = \RSThmtxt,
        Names     = \RSThmstxt,
        rngtxt    = \RSrngtxt,
        lsttwotxt = \RSlsttwotxt,
        lsttxt    = \RSlsttxt
}

\newref{def}{
        name      = \RSdefntxt,
        names     = \RSdefnstxt,
        Name      = \RSDefntxt,
        Names     = \RSDefnstxt,
        rngtxt    = \RSrngtxt,
        lsttwotxt = \RSlsttwotxt,
        lsttxt    = \RSlsttxt
}

\newref{prop}{
        name      = \RSproptxt,
        names     = \RSpropstxt,
        Name      = \RSProptxt,
        Names     = \RSPropstxt,
        rngtxt    = \RSrngtxt,
        lsttwotxt = \RSlsttwotxt,
        lsttxt    = \RSlsttxt
}

\newref{rem}{
        name      = \RSremtxt,
        names     = \RSremstxt,
        Name      = \RSRemtxt,
        Names     = \RSRemstxt,
        rngtxt    = \RSrngtxt,
        lsttwotxt = \RSlsttwotxt,
        lsttxt    = \RSlsttxt
}

\newref{lem}{
        name      = \RSlemtxt,
        names     = \RSlemstxt,
        Name      = \RSLemtxt,
        Names     = \RSLemstxt,
        rngtxt    = \RSrngtxt,
        lsttwotxt = \RSlsttwotxt,
        lsttxt    = \RSlsttxt
}

\def\RSdefntxt{definition~}
\def\RSdefnstxt{definitions~}
\def\RSDefntxt{Definition~}
\def\RSDefnstxt{Definitions~}
\def\RSlemtxt{lemma~}
\def\RSlemstxt{lemmas~}
\def\RSLemtxt{Lemma~}
\def\RSLemstxt{Lemmas~}
\def\RSproptxt{proposition~}
\def\RSpropstxt{propositions~}
\def\RSProptxt{Proposition~}
\def\RSPropstxt{Propositions~}
\def\RSremtxt{remark~}
\def\RSremstxt{remarks~}
\def\RSRemtxt{Remark~}
\def\RSRemstxt{Remarks~}
\def\RSthmtxt{theorem~}
\def\RSthmstxt{theorems~}
\def\RSThmtxt{Theorem~}
\def\RSThmstxt{Theorems~}
\providecommand{\definitionname}{Definition}
\providecommand{\lemmaname}{Lemma}
\providecommand{\propositionname}{Proposition}
\providecommand{\remarkname}{Remark}
\providecommand{\theoremname}{Theorem}

\begin{document}
\title{Rates of forgetting for the sequentially Markov coalescent}
\author{Jonathan Terhorst\\Department of Statistics\\University of Michigan\footnote{Corresponding author: \texttt{jonth@umich.edu}}}
\date{\today}
\maketitle
\begin{abstract}
The sequentially Markov coalescent (SMC) is a Markov jump process
which models correlations in local genealogies across a chromosome.
It has been used as a theoretical tool for studying linkage disequilibrium
and identity-by-descent, and it also forms the basis of a class of
statistical procedures for estimating population history and inferring ancestry. 
In this paper, we study the rate at which SMC forgets its initial
condition in the pairwise setting. For the embedded jump chain,
we prove geometric ergodicity in total variation, with explicit constants.
For the continuous process, by contrast, the total variation distance from stationarity decays as $\asymp 1/\ell$ in genetic distance $\ell$. We obtain analogous
results for the closely related SMC' process using a novel time-change argument.
One application of these results is to justify heuristic approximations used in the literature that treat distant loci as evolving independently.
\end{abstract}
\global\long\def\dtv#1#2{\left\Vert #1-#2\right\Vert _{\text{TV}}}%

\global\long\def\E{\mathbb{E}}%

\section{Introduction}

The ancestral recombination graph (ARG) describes the joint genealogical
history of sampled genomes along a recombining chromosome. As originally conceptualized by Griffiths and Marjoram \parencite{griffiths1991two,griffiths1996ancestral,griffiths1997ancestral}, the ARG
is a partition-valued Markov process in which 
ancestral lineages undergo coalescence and recombination as one traces
their history into the past. A complementary spatial viewpoint was
later introduced by \textcite{wiuf1999recombination}, who showed that
the ARG can be equivalently constructed along the genome by updating the local genealogy
from left to right. This reformulation was important because it linked
coalescent theory to the sequential methods that were transforming
biological sequence analysis at the time \parencite{durbin1998biological,eddy1998profile}.

One difficulty in working with the Wiuf--Hein process is that it is not Markovian: the genealogy at a given position
depends on those at all previous positions. \textcite{mcvean2005approximating} introduced the sequentially
Markov coalescent (SMC) as a tractable Markov approximation to the
spatial ARG. Shortly thereafter, a refined version of this model, SMC' (``SMC-prime''),
was proposed to improve accuracy by allowing the process to have a limited form of memory \parencite{marjoram2006fast,wilton2015smc}.
The SMC and SMC' models now underlie many widely used methods for demographic inference,
recombination-rate estimation, and genealogical reconstruction \parencite{li2011inference,rasmussen2014genome,terhorst2017robust,deng2025robust}.

A basic question in stochastic process theory is how quickly the process forgets
its initial condition and converges to its stationary distribution. For the SMC-type processes, apart from theoretical interest, this question is practically relevant:
it determines the genomic scale
on which distant loci can be treated as approximately independent.
Thus it enters, for
example, into heuristic uses of effective independence such as LD pruning
in GWAS and, more recently, estimating population history using thinned marginal genealogies extracted from inferred ARGs \parencite{osmond2024dispersal,fan2025likelihood,dehaas2025inference,grundler2025geographic}.

In this paper we study the rates of forgetting for SMC and SMC' in the simplest case of
two chromosomes sampled from a panmictic population.
For the embedded jump chains, we prove geometric ergodicity in total
variation, with explicit bounds. In contrast, for the continuous processeses,
we show that the total variation
distance from stationarity decays only inversely with genetic distance. 
Thus the jump scale and the spatial scale have quantitatively different
forgetting behavior. 

The rest of the paper is organized as follows. In Section \ref{sec:preliminaries} we define the SMC and SMC' processes and record some of their basic properties. In Section \ref{sec:results} we state and prove our main results on the rates of forgetting for these processes. Finally, in Section \ref{sec:discussion} we discuss the implications of our results and directions for future work.

\section{Preliminaries}
\label{sec:preliminaries}

For $n=2$ samples, 
SMC describes the local time to most recent common ancestor (TMRCA) of a pair of recombining chromosomes along a recombining genome. 
Throughout the paper $\ell$ denotes genetic distance along the chromosome.

\begin{defn}[Sequentially Markov coalescent process]
\label{def:smc}Let $(Y_{\ell})_{\ell\ge0}$ be a Markov
jump process on $(0,\infty)$ with the following dynamics. When $Y_{\ell}=y$,
the process waits an exponential time $r(y)=y$ before jumping,
at which time the process takes on
the new value $Y'=Uy+Z$, where $U\sim\mathrm{Unif}(0,1)$ is the
recombination time on the current tree expressed as a fraction of
the tree height, and $Z\sim\mathrm{Exp}(1)$ is the additional coalescence
time after the recombination event. 

Equivalently, one may construct $(Y_{\ell})$ from its embedded jump
chain and holding times as follows. Let $(X_{n})_{n\ge0}$ be the
jump chain and let $(E_{n})_{n\ge0}$ be i.i.d.\ $\mathrm{Exp}(1)$,
independent of the chain. Given $X_{n}$, define the holding time
\[
\Delta\ell_{n}:=\frac{E_{n}}{X_{n}}\ \sim\ \mathrm{Exp}(X_{n}),
\]
and jump locations $\ell_{0}:=0$, $\ell_{n+1}:=\ell_{n}+\Delta\ell_{n}$.
Then define 
\begin{align}
Y_{\ell} & :=X_{n}\quad\text{for }\ell\in[\ell_{n},\ell_{n+1}),\nonumber \\
X_{n+1} & :=U_{n}X_{n}+Z_{n},\label{eq:Xn+1}
\end{align}
where $(U_{n},Z_{n})$ are i.i.d.\ with $U_{n}\sim\mathrm{Unif}(0,1)$
and $Z_{n}\sim\mathrm{Exp}(1)$. In particular, $(Y_{\ell})$ is almost surely
piecewise constant.
\end{defn}

The SMC' \parencite{marjoram2006fast} is a related
process with similar dynamics. Interest in the SMC' process arises
because it is more faithful to the underlying ancestral recombination
graph structure which both SMC and SMC' are trying to approximate
\parencite{wilton2015smc}.
\begin{defn}
\label{def:smc-prime} Let $(Y'_{\ell})_{\ell\ge0}$ be a Markov jump
process on $(0,\infty)$ with the following dynamics. When
$Y'_{\ell}=y$, recombination events occur at rate $y$. Given
that a recombination occurs at time
$r\in(0,y)$, the detached lineage recoalesces with either of the
two existing lineages at rate $1$ each (total rate $2$). If recoalescence
occurs with its own ancestral lineage before time $y$, then the recombination
is \emph{silent} and the state does not change. Otherwise---either
if recoalescence occurs with the other lineage, or if no recoalescence
occurs before time $y$---the recombination is \emph{visible}, and
the process jumps to a new TMRCA determined by the time of this coalescence
event.
The probability of a visible recombination at current TMRCA $s$ is
\begin{equation}
p_{\mathrm{vis}}(s)=\frac{2s+1-e^{-2s}}{4s},\qquad
p_{\mathrm{sil}}(s)=1-p_{\mathrm{vis}}(s).
\label{eq:p_vis}
\end{equation}

Equivalently, one may construct $(Y'_{\ell})$ from its embedded jump
chain and holding times as follows. Let $(X'_{n})_{n\ge0}$ be the
jump chain, and let $(E_{n})_{n\ge0}$ be i.i.d.\ $\mathrm{Exp}(1)$
random variables, independent of the chain. Given $X'_{n}$, define
the holding time 
\[
\Delta\ell_{n}:=\frac{E_{n}}{X'_{n} p_{\mathrm{vis}}(X_{n}')}
\]
and the jump locations $\ell_{0}:=0$, $\ell_{n+1}:=\ell_{n}+\Delta\ell_{n}$.
Then define 
\[
Y'_{\ell}:=X'_{n}\quad\text{for }\ell\in[\ell_{n},\ell_{n+1}),
\]
and $X'_{n+1}\sim q_{\mathrm{SMC}'}( \cdot \mid X_{n}'),$ where
\[
q_{\mathrm{SMC}'}(t\mid s)=\begin{cases}
\dfrac{2(1-e^{-2t})}{2s+1-e^{-2s}}, & 0<t<s,\\[10pt]
\dfrac{2(1-e^{-2s})}{2s+1-e^{-2s}} e^{-(t-s)}, & t\ge s.
\end{cases}
\]
\end{defn}

We record for use below the following standard facts about the SMC
and SMC' jump chains.

\begin{prop}
The SMC jump chain $(X_{n})$ defined by (\ref{eq:Xn+1}) has
transition density
\begin{equation}
q_{\mathrm{SMC}}(t\mid s)=\frac{e^{-t}\left(e^{s\wedge t}-1\right)}{s},\quad s>0, t>0.\label{eq:q(t|s)}
\end{equation}
It is irreducible and reversible with respect to 
\[
\mu(dt)=te^{-t} dt,
\]
and $\mu$ is the unique invariant probability measure.

The SMC' jump chain $(X'_{n})$ has transition density
$q_{\mathrm{SMC}'}(t\mid s)$ given above. 
It is irreducible and reversible with respect to 
\[
\mu'(dt)=\frac{3}{8} \bigl(2t+1-e^{-2t}\bigr)e^{-t} dt.
\]
and $\mu'$ is the unique invariant probability measure.
\end{prop}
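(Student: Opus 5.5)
I will verify the proposition directly from the definitions, in four steps: compute each transition density, check detailed balance, deduce irreducibility, and conclude uniqueness of the invariant measure.

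\textbf{SMC transition density.} Condition on $X_n=s$. Then $t=Us+Z$, where $Us\sim\mathrm{Unif}(0,s)$ is independent of $Z\sim\mathrm{Exp}(1)$. Convolving gives
\[
q_{\mathrm{SMC}}(t\mid s)=\frac{1}{s}\int_{0}^{s\wedge t}e^{-(t-u)}\,du=\frac{e^{-t}\left(e^{s\wedge t}-1\right)}{s},
\]
which is (\ref{eq:q(t|s)}).

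\textbf{SMC reversibility.} Take $\mu(ds)=se^{-s}ds$. Then
\[
\mu(s)\,q_{\mathrm{SMC}}(t\mid s)=e^{-s-t}\left(e^{s\wedge t}-1\right),
\]
which is symmetric in $(s,t)$, so detailed balance holds. Integrating detailed balance over $s$ shows that $\mu$ is invariant. Moreover $\int_0^\infty te^{-t}\,dt=1$, so $\mu$ is a probability measure.

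\textbf{SMC irreducibility and uniqueness.} The density $q_{\mathrm{SMC}}(\cdot\mid s)$ is strictly positive on $(0,\infty)$ for every $s>0$. Hence the chain is $\lambda$-irreducible, where $\lambda$ is Lebesgue measure, and every set of positive Lebesgue measure is reached in one step. A $\psi$-irreducible chain that has an invariant probability measure is positive recurrent, and its invariant probability measure is unique (Meyn--Tweedie, Thm.~10.0.1). Alternatively, one can argue directly: two distinct invariant probabilities would give mutually singular invariant components, and this is impossible because each component charges every Lebesgue-positive set after one step while the kernel is equivalent to Lebesgue measure.

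\textbf{SMC' transition density.} I will take $q_{\mathrm{SMC}'}$ as given by Definition~\ref{def:smc-prime}. A quick derivation serves as a consistency check. Condition on $s$ and a recombination at time $r\sim\mathrm{Unif}(0,s)$.
\begin{itemize}
\item For $t<s$, only coalescence with the other lineage at time $t\in(r,s)$ is visible. This contributes $\frac1s\int_0^t e^{-2(t-r)}\,dr=\frac{1-e^{-2t}}{2s}$.
\item For $t>s$, there must be no coalescence on $(r,s)$, after which the single remaining lineage coalesces at rate $1$. This contributes $\frac1s\int_0^s e^{-2(s-r)}\,dr\,e^{-(t-s)}=\frac{1-e^{-2s}}{2s}e^{-(t-s)}$.
\end{itemize}
Integrating these two pieces over $t$ gives $p_{\mathrm{vis}}(s)=\frac{2s+1-e^{-2s}}{4s}$. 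Dividing by $p_{\mathrm{vis}}(s)$ yields exactly $q_{\mathrm{SMC}'}$, so the formula is confirmed.

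\textbf{SMC' reversibility.} For $t<s$,
\[
\mu'(s)\,q_{\mathrm{SMC}'}(t\mid s)=\tfrac34\,e^{-s}\left(1-e^{-2t}\right).
\]
By the $t\ge s$ branch with the roles of $s$ and $t$ swapped (so that $s>t$ is the current state),
\[
\mu'(t)\,q_{\mathrm{SMC}'}(s\mid t)=\tfrac38\left(2t+1-e^{-2t}\right)e^{-t}\cdot\frac{2\left(1-e^{-2t}\right)}{2t+1-e^{-2t}}\,e^{-(s-t)}=\tfrac34\,e^{-s}\left(1-e^{-2t}\right).
\]
The two expressions agree, so detailed balance holds and $\mu'$ is invariant. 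Normalization follows from
\[
\int_0^\infty\left(2t+1-e^{-2t}\right)e^{-t}\,dt=2+1-\tfrac13=\tfrac83,
\]
so the constant $\tfrac38$ makes $\mu'$ a probability measure.

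\textbf{SMC' irreducibility and uniqueness.} The density $q_{\mathrm{SMC}'}(\cdot\mid s)$ is also strictly positive on $(0,\infty)$. The same irreducibility and uniqueness argument as for SMC therefore applies.

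\textbf{Main obstacle.} The only non-algebraic step is uniqueness of the invariant measure. It is handled by strict positivity of the transition densities, which gives Lebesgue-irreducibility, together with the standard theorem for $\psi$-irreducible chains. Everything else is a direct computation.
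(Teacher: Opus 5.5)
Your proposal is correct. The convolution for $q_{\mathrm{SMC}}$, the symmetric products $\mu(s)q_{\mathrm{SMC}}(t\mid s)=e^{-s-t}(e^{s\wedge t}-1)$ and $\mu'(s)q_{\mathrm{SMC}'}(t\mid s)=\tfrac34 e^{-(s\vee t)}\bigl(1-e^{-2(s\wedge t)}\bigr)$, the normalizations, and the strict positivity of both kernels all check out; positivity gives Lebesgue-irreducibility, and uniqueness then follows because a $\psi$-irreducible chain with an invariant probability is recurrent. The paper gives no proof of this proposition and simply records it as standard facts, so your direct verification supplies exactly the standard argument those facts rest on.
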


Note that, whereas the SMC jump chain has a simple representation as a mixture of a uniform and an exponential distribution, the SMC' jump chain has a more complicated structure. This will become important in the analysis of the rates of forgetting for these two processes. 

\section{Results}
\label{sec:results}

In this section we prove our main results, which are sharp (up to
constants) estimates of the rates of forgetting for the SMC and SMC',
for the continuous processes, as well as upper bounds on the forgetting rates of the embedded jump chains.

For these results we work with total variation distance: if $\mu$
and $\pi$ are two measures defined on a common space, then the total
variation distance is defined to be 
\[
d_{\text{TV}}(\mu,\pi)=\sup_{A}\left|\mu(A)-\pi(A)\right|
\]
where the supremum is over all measurable subsets $A$. If $\mu$
and $\pi$ possess densities $f_{\mu}$ and $f_{\pi}$ then equivalently
\[
d_{\text{TV}}(\mu,\pi)=\frac{1}{2}\int\left|f_{\mu}-f_{\pi}\right|.
\]
When $X\sim\mu$ and $Y\sim\pi$, we sometimes abuse notation and
write $d_{\text{TV}}(X,Y)$ to mean either of the above definitions
if there is no risk of confusion.

\subsection{The jump chains}

The goal of this section is to prove exponential forgetting for the
embedded jump chains of the SMC and SMC' processes. This is accomplished
using the following result, which bounds the rate of forgetting
for chains possessing certain contractivity properties. This recalls the well known Foster-Lyapunov method \parencite{roberts2004general,meyn2012markov}, and we could also have employed that machinery here.
An advantage of \Propref{tv-general} is that it is simpler and yields better constants.
\begin{prop}
\label{prop:tv-general} Let $K$ be a Markov kernel on $(0,\infty)$
with transition density $q( \cdot |s)$ with respect to Lebesgue
measure, and let $\pi$ be an invariant probability measure for $K$.
Suppose that:
\begin{enumerate}
\item There exists $L<\infty$ such that 
\begin{equation}
\int^{\infty}_{0}\bigl|q(t|a)-q(t|b)\bigr| dt\le L |a-b|,\qquad a,b>0;\label{eq:l1-lipschitz}
\end{equation}
\item There exists a coupling $(X_{n},Y_{n})_{n\ge0}$ of two copies of
the chain, with 
\[
X_{0}=x,\qquad Y_{0}\sim\pi,
\]
such that for some $r\in(0,1)$, 
\begin{equation}
\mathbb{E}\!\left[ |X_{n+1}-Y_{n+1}| \, \mid  \, X_{n},Y_{n}\right]\le r |X_{n}-Y_{n}|\qquad\text{a.s. for all }n\ge0.\label{eq:contractive-coupling}
\end{equation}
Then for every $n\ge1$, 
\begin{equation}
d_{\mathrm{TV}}\!\bigl(K^{n}(x,\cdot),\pi\bigr)\le\frac{L}{2} r^{ n-1} \mathbb{E}|x-Y_{0}|.\label{eq:general-tv-bound}
\end{equation}
In particular, the chain is geometrically ergodic in total variation.
\end{enumerate}
\end{prop}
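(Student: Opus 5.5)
The plan is to combine a one-step smoothing estimate with the contraction of the coupling. Since $Y_0\sim\pi$ and $\pi$ is invariant, $Y_n\sim\pi$ for every $n$. Also $X_n\sim K^n(x,\cdot)$.

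The key observation is that for $n\ge1$ we can write
\[
K^{n}(x,\cdot)-\pi=\mathbb{E}\bigl[q(\cdot\mid X_{n-1})-q(\cdot\mid Y_{n-1})\bigr].
\]
This holds because the law of $X_n$ has density $\mathbb{E}\,q(t\mid X_{n-1})$, the marginal law of $X_{n-1}$ being $K^{n-1}(x,\cdot)$. Likewise, $\pi$ has density $\mathbb{E}\,q(t\mid Y_{n-1})$, by invariance and since $Y_{n-1}\sim\pi$. Both expectations can be taken under the same coupling, since only the marginals of $X_{n-1}$ and $Y_{n-1}$ matter.

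Next we use the density formula for total variation. Pulling the absolute value inside the expectation (Jensen/Tonelli) and applying (\ref{eq:l1-lipschitz}) gives
\[
d_{\mathrm{TV}}\bigl(K^n(x,\cdot),\pi\bigr)=\tfrac12\int_0^\infty\Bigl|\mathbb{E}\bigl[q(t\mid X_{n-1})-q(t\mid Y_{n-1})\bigr]\Bigr|\,dt\le\tfrac12\,\mathbb{E}\int_0^\infty\bigl|q(t\mid X_{n-1})-q(t\mid Y_{n-1})\bigr|\,dt\le\tfrac L2\,\mathbb{E}|X_{n-1}-Y_{n-1}|.
\]
Measurability of $(s,t)\mapsto q(t\mid s)$ is implicit in $K$ being a kernel, so Tonelli applies to the nonnegative integrand.

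It remains to bound $\mathbb{E}|X_{n-1}-Y_{n-1}|$. We iterate (\ref{eq:contractive-coupling}) using the tower property: $\mathbb{E}|X_{k+1}-Y_{k+1}|\le r\,\mathbb{E}|X_k-Y_k|$. By induction, $\mathbb{E}|X_{n-1}-Y_{n-1}|\le r^{n-1}\mathbb{E}|x-Y_0|$, which yields (\ref{eq:general-tv-bound}).

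If $\mathbb{E}|x-Y_0|=\infty$ the bound is vacuous. Otherwise, since $\mathbb{E}|x-Y_0|\le x+\int s\,\pi(ds)$, geometric ergodicity holds for every $x$ whenever $\pi$ has a finite first moment. The "in particular" claim should be read in that sense, and I would note this explicitly.

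The main subtle point is the first identity. One has to be careful that the $L^1$-Lipschitz bound is applied pathwise inside the expectation, and that the one-step smoothing is what converts the Wasserstein-type contraction into total variation. This smoothing step is the reason for the loss of one power of $r$ (the exponent $n-1$ rather than $n$). Everything else is routine.
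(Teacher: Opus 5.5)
Your proof is correct and is essentially the paper's argument: write both densities as expectations of $q(t\mid\cdot)$ at step $n-1$ under the coupling, move the absolute value inside, apply the $L^{1}$-Lipschitz bound, and iterate the contraction via the tower property. Your point that the ``in particular'' clause needs $\mathbb{E}|x-Y_{0}|<\infty$ (e.g.\ $\pi$ with finite first moment) is a sensible clarification that the paper leaves implicit.
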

\begin{proof}
Let $f_{n}$ denote the density of $K^{n}(x,\cdot)$, and also write
$\pi(t)$ for the stationary density of $\pi$. By the Markov property
and stationarity, 
\[
f_{n}(t)=\mathbb{E}\!\left[q(t|X_{n-1})\right],\qquad\pi(t)=\mathbb{E}\!\left[q(t|Y_{n-1})\right].
\]
Therefore 
\begin{align*}
2 d_{\mathrm{TV}}\!\bigl(K^{n}(x,\cdot),\pi\bigr) & =\int^{\infty}_{0}|f_{n}(t)-\pi(t)| dt\\
 & \le\int^{\infty}_{0}\mathbb{E}\!\left[\bigl|q(t|X_{n-1})-q(t|Y_{n-1})\bigr|\right]dt\\
 & =\mathbb{E}\!\left[\int^{\infty}_{0}\bigl|q(t|X_{n-1})-q(t|Y_{n-1})\bigr|dt\right]\\
 & \le L \mathbb{E}|X_{n-1}-Y_{n-1}|
\end{align*}
by (\ref{eq:l1-lipschitz}). Iterating (\ref{eq:contractive-coupling})
yields 
\[
\mathbb{E}|X_{n-1}-Y_{n-1}|\le r^{ n-1} \mathbb{E}|x-Y_{0}|.
\]
Substituting this into the previous display gives (\ref{eq:general-tv-bound}). 
\end{proof}

Next we develop some lemmas which are useful for verifying the conditions of the proposition.
In the following result, $L^1(0,\infty)$ denotes the Banach space of integrable functions on $(0,\infty)$ with norm $\|f\|_{L^1}=\int_0^\infty |f(t)| dt$.
\begin{lem}
\label{lem:l1bound}Let $\{q_{x}(\cdot):x>0\}$ be a family of probability
densities on $(0,\infty)$, and regard $x\mapsto q_{x}(\cdot)$ as
a map from $(0,\infty)$ into $L^{1}(0,\infty)$.
Assume there exist continuously differentiable functions $A,B:(0,\infty)\to\mathbb{R}$
and an integrable function $\Phi:(0,\infty)\to\mathbb{R}$ such that
the following all hold:
\begin{gather}
q_{x}(t)=A(x) \Phi(t) \mathbf{1}_{\{0<t<x\}}+B(x) e^{-t} \mathbf{1}_{\{t\ge x\}},\qquad t>0.\label{eq:kernel-form}\\
\lim_{t\uparrow x}q_{x}(t)=A(x)\Phi(x)=B(x)e^{-x}=\lim_{t\downarrow x}q_{x}(t).\label{eq:matching}\\
\partial_{x}q_{x}(t):=A'(x) \Phi(t) \mathbf{1}_{\{0<t<x\}}+B'(x) e^{-t} \mathbf{1}_{\{t>x\}}\in L^{1}(0,\infty)\label{eq:formal-derivative}\\
\sup_{x>0}\|\partial_{x}q_{x}\|_{L^{1}}\le L<\infty.\label{eq:deriv-bound}
\end{gather}
Then $x\mapsto q_{x}$ is absolutely continuous as an $L^{1}$-valued
function, and for all $a,b>0$, 
\begin{equation}
q_{b}-q_{a}=\int^{b}_{a}\partial_{x}q_{x} dx,\quad\text{in \ensuremath{L^{1}.}}\label{eq:fundamental-theorem}
\end{equation}
Furthermore, for all $a,b>0$, 
\begin{equation}
\|q_{b}-q_{a}\|_{L^{1}}\le L |b-a|.\label{eq:l1-lipschitz-conclusion}
\end{equation}
\end{lem}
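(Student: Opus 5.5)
The plan is to prove the fundamental-theorem identity (\ref{eq:fundamental-theorem}) pointwise in $t$ first, and then lift it to $L^1$. The Lipschitz bound (\ref{eq:l1-lipschitz-conclusion}) will then follow from Minkowski's integral inequality.

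Fix $0<a<b$ and fix $t>0$ with $t\neq a,b$. We examine the scalar function $g_t(x):=q_x(t)$ on $[a,b]$. By (\ref{eq:kernel-form}), $g_t(x)=B(x)e^{-t}$ for $x\le t$ and $g_t(x)=A(x)\Phi(t)$ for $x>t$. Each piece is $C^1$ because $A$ and $B$ are.

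The only possible break is at $x=t$. There the two one-sided values are $B(t)e^{-t}$ and $A(t)\Phi(t)$, and these agree by the matching condition (\ref{eq:matching}) evaluated at $x=t$. Hence $g_t$ is continuous and piecewise $C^1$ on $[a,b]$. Its derivative off $x=t$ equals $\partial_x q_x(t)$ as defined in (\ref{eq:formal-derivative}). The scalar fundamental theorem of calculus, applied separately on $[a,t]$ and $[t,b]$, then gives
\[
q_b(t)-q_a(t)=\int_a^b \partial_x q_x(t)\,dx
\]
for every $t\notin\{a,b\}$, and so for a.e.\ $t$.

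The step I expect to be the main obstacle is upgrading this pointwise identity to a Bochner integral identity in $L^1$. This requires joint measurability of $(x,t)\mapsto\partial_x q_x(t)$, which is clear: it is a finite sum of products of continuous functions of $x$, measurable functions of $t$, and indicators of the sets $\{t<x\}$ and $\{t>x\}$. It also requires
\[
\int_a^b\|\partial_x q_x\|_{L^1}\,dx\le L(b-a)<\infty,
\]
which holds by (\ref{eq:deriv-bound}). Given these, Fubini--Tonelli applies. It shows that the Bochner integral $\int_a^b\partial_x q_x\,dx$ exists in $L^1$ and that it is represented a.e.\ by the pointwise integral computed above. This yields (\ref{eq:fundamental-theorem}).

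Absolute continuity of $x\mapsto q_x$ as an $L^1$-valued map follows at once. An indefinite Bochner integral of a Bochner-integrable function is absolutely continuous, and here the bound is
\[
\sum_i\|q_{b_i}-q_{a_i}\|_{L^1}\le L\sum_i|b_i-a_i|.
\]
Finally,
\[
\|q_b-q_a\|_{L^1}\le\int_a^b\|\partial_xq_x\|_{L^1}\,dx\le L|b-a|
\]
by Minkowski's inequality together with (\ref{eq:deriv-bound}). The case $a>b$ follows by symmetry, which gives (\ref{eq:l1-lipschitz-conclusion}).
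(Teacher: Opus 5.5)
Your proposal is correct and follows the paper's proof essentially step for step. You fix $t$, use the matching condition at $x=t$ to make $x\mapsto q_x(t)$ continuous and piecewise $C^1$ so that the scalar fundamental theorem of calculus applies, lift the pointwise identity to $L^1$ via Fubini and the uniform bound on $\|\partial_x q_x\|_{L^1}$, and finish with Minkowski's inequality. Your joint-measurability check, the Bochner-integral framing, and the explicit derivation of $L^1$-absolute continuity only spell out details the paper leaves implicit.
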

\begin{proof}
Fix $0<a<b$. For each fixed $t>0$, the function $x\mapsto q_{x}(t)$
has the form
\[
q_{x}(t)=
\begin{cases}
B(x)e^{-t}, & 0<x\le t,\\
A(x)\Phi(t), & x>t.
\end{cases}
\]
Thus $x\mapsto q_{x}(t)$ is continuously differentiable away from
$x=t$. At $x=t$, the one-sided values agree by (\ref{eq:matching}):
\[
\lim_{x\uparrow t}q_{x}(t)=B(t)e^{-t}=A(t)\Phi(t)=\lim_{x\downarrow t}q_{x}(t).
\]
Therefore $x\mapsto q_{x}(t)$ is absolutely continuous on $[a,b]$,
and its derivative is $\partial_{x}q_{x}(t)$ for almost every $x\in(a,b)$.
This implies that for every $t>0$,
\begin{equation}
q_{b}(t)-q_{a}(t)=\int^{b}_{a}\partial_{x}q_{x}(t) dx.\label{eq:l1_ftc}
\end{equation}
Moreover, by (\ref{eq:deriv-bound}),
\[
\int^{\infty}_{0}\int^{b}_{a}|\partial_{x}q_{x}(t)| dx dt\le\int^{b}_{a}\|\partial_{x}q_{x}\|_{L^{1}} dx\le L(b-a)<\infty.
\]
Hence Fubini's theorem shows that $t\mapsto\int^{b}_{a}\partial_{x}q_{x}(t) dx$
belongs to $L^{1}(0,\infty)$. Since $q_{b}-q_{a}\in L^{1}(0,\infty)$
as well, (\ref{eq:l1_ftc}) yields (\ref{eq:fundamental-theorem}).
Taking $L^{1}$ norms and applying Minkowski's integral inequality
gives
\[
\|q_{b}-q_{a}\|_{L^{1}}=\left\Vert \int^{b}_{a}\partial_{x}q_{x} dx\right\Vert _{L^{1}}\le\int^{b}_{a}\|\partial_{x}q_{x}\|_{L^{1}} dx\le\int^{b}_{a}L dx=L|b-a|,
\]
which proves (\ref{eq:l1-lipschitz-conclusion}).
\end{proof}

\begin{lem}
\label{lem:l1-contraction}Let $q(t\mid x)$ be a probability distribution
in $t$ and assume that it is stochastically monotone, i.e.
\[
F_{x}(y):=\int^{y}_{0}q(t|x) dt
\]
is non-increasing in $x$ for fixed $y>0$. Further define the mean
map 
\[
m(x):=\int^{\infty}_{0}t q(t|x) dt.
\]
If there exists $0<r<1$ such that 
\begin{equation}
|m(a)-m(b)|\le r |a-b|,\qquad a,b>0,\label{eq:mean-contraction}
\end{equation}
then the quantile coupling 
\[
X_{n+1}=F^{-1}_{X_{n}}(U_{n}),\qquad Y_{n+1}=F^{-1}_{Y_{n}}(U_{n}),\qquad U_{n}\sim\mathrm{Unif}(0,1),
\]
satisfies (\ref{eq:contractive-coupling}). 
\end{lem}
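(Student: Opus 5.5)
Under the quantile coupling, stochastic monotonicity makes the two coordinates ordered pathwise, so the absolute difference of the coupled next states collapses to the absolute difference of the two conditional means, and the mean contraction (\ref{eq:mean-contraction}) then gives (\ref{eq:contractive-coupling}) directly.

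\emph{Step 1: ordering.} Condition on $X_n=a$ and $Y_n=b$, and assume without loss of generality that $a\le b$. Use the generalized inverse
\[
F^{-1}_x(u)=\inf\{y>0:F_x(y)\ge u\}.
\]
Stochastic monotonicity gives $F_b(y)\le F_a(y)$ for every $y>0$. Hence the set $\{y:F_b(y)\ge u\}$ is contained in $\{y:F_a(y)\ge u\}$. Taking infima, $F^{-1}_a(u)\le F^{-1}_b(u)$ for every $u\in(0,1)$. So $X_{n+1}\le Y_{n+1}$ almost surely, and therefore
\[
|X_{n+1}-Y_{n+1}|=Y_{n+1}-X_{n+1}.
\]

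\emph{Step 2: conditional expectation.} The fresh uniform $U_n$ is independent of $(X_n,Y_n)$, and $F^{-1}_x(U_n)$ has law $q(\cdot\mid x)$. Therefore
\[
\mathbb{E}\bigl[|X_{n+1}-Y_{n+1}|\,\big|\,X_n=a,Y_n=b\bigr]=m(b)-m(a).
\]
Applying (\ref{eq:mean-contraction}) gives $m(b)-m(a)\le r|a-b|$. The case $a>b$ is symmetric, and the case $a=b$ is trivial since then $X_{n+1}=Y_{n+1}$. This yields (\ref{eq:contractive-coupling}) almost surely for every $n$.

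\emph{Main obstacle: integrability.} The subtraction $m(b)-m(a)$ in Step 2 requires both means to be finite. This is implicit in the hypothesis, since (\ref{eq:mean-contraction}) is only meaningful when $m$ is real-valued. I will note this explicitly and then split the expectation by linearity.

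A secondary point is that the coupling must be a genuine coupling of two copies of the chain. This holds because each marginal update uses a uniform independent of the past, so each coordinate is a Markov chain with kernel $q$. We may also take $Y_0\sim\pi$ independent of $X_0=x$, as required in \Propref{tv-general}.
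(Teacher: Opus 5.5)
Your proposal is correct and follows the paper's proof: stochastic monotonicity orders the quantile-coupled states, so the conditional expectation of $|X_{n+1}-Y_{n+1}|$ equals $|m(X_n)-m(Y_n)|$, and the mean contraction bounds this by $r|X_n-Y_n|$. Your extra details (the generalized-inverse inclusion argument, finiteness of $m$, and independence of $U_n$ from the past) make explicit what the paper's two-line proof leaves implicit.
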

\begin{proof}
Stochastic monotonicity implies that if $X_{n}\ge Y_{n}$ then $X_{n+1}\ge Y_{n+1}$
a.s., and hence 
\[
\mathbb{E}\!\left[|X_{n+1}-Y_{n+1}| \, \mid \, X_{n},Y_{n}\right]=m(X_{n})-m(Y_{n})\le r |X_{n}-Y_{n}|.
\]
The case $X_{n}\le Y_{n}$ is identical.
\end{proof}

Using the preceding, we can now prove the main results of this section
on the rates of forgetting for the SMC and SMC' jump chains.
\begin{thm}[Geometric ergodicity of the SMC jump chain]
\label{cor:smc-jump-chain} Let $K_{\mathrm{SMC}}$ be the jump kernel
with density $q_{\mathrm{SMC}}(t\mid s)$ and invariant law $\mu$.
Then
\[
d_{\mathrm{TV}}\!\bigl(K^{n}_{\mathrm{SMC}}(x,\cdot),\mu\bigr)\le\frac{(5/2)(x+2).}{2^{n}}
\]
\end{thm}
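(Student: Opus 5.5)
The plan is to apply \Propref{tv-general} to $K_{\mathrm{SMC}}$ with $\pi=\mu$. I will obtain the Lipschitz constant $L$ from \Lemref{l1bound}, and the contraction rate $r=1/2$ from an explicit coupling. Since
\[
\frac{L}{2}\,r^{n-1}\,\mathbb{E}|x-Y_0| = L\,2^{-n}\,\mathbb{E}|x-Y_0|,
\]
it then suffices to show $L\le 5/2$ and $\mathbb{E}|x-Y_0|\le x+2$.

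\emph{Step 1 (the $L^1$-Lipschitz constant).} Writing out \eqref{eq:q(t|s)} for fixed $x$ gives
\[
q_{\mathrm{SMC}}(t\mid x)=\frac{1-e^{-t}}{x}\mathbf 1_{\{t<x\}}+\frac{e^{x}-1}{x}\,e^{-t}\mathbf 1_{\{t\ge x\}}.
\]
This is of the form \eqref{eq:kernel-form} with $A(x)=1/x$, $\Phi(t)=1-e^{-t}$ and $B(x)=(e^x-1)/x$. The matching condition \eqref{eq:matching} holds because $(1-e^{-x})/x=(e^x-1)e^{-x}/x$. We have $A'(x)=-1/x^2$ and $B'(x)=(xe^x-e^x+1)/x^2$. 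Integrating the formal derivative, both pieces contribute the same amount $(x-1+e^{-x})/x^2$, so
\[
\|\partial_x q_x\|_{L^1}=\frac{2(x-1+e^{-x})}{x^2}.
\]
The one inequality to check is $e^{-x}\le 1-x+x^2/2$ for $x\ge0$, which follows from Taylor's theorem with an alternating remainder. It gives $\|\partial_x q_x\|_{L^1}\le 1$, so \Lemref{l1bound} yields \eqref{eq:l1-lipschitz} with $L=1\le 5/2$. Both $A,B$ are $C^1$ on $(0,\infty)$ and $\Phi$ is bounded (locally integrable suffices here, since only $\int_0^x\Phi$ enters), so the hypotheses are met.

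\emph{Step 2 (contractive coupling).} I will use the synchronous coupling driven by \eqref{eq:Xn+1}: $X_{n+1}=U_nX_n+Z_n$ and $Y_{n+1}=U_nY_n+Z_n$, with the same $(U_n,Z_n)$ and $X_0=x$, $Y_0\sim\mu$. Then $|X_{n+1}-Y_{n+1}|=U_n|X_n-Y_n|$, and since $U_n$ is independent of $(X_n,Y_n)$, \eqref{eq:contractive-coupling} holds with $r=\mathbb{E}U_n=1/2$. Alternatively, \Lemref{l1-contraction} applies directly: the mean map is $m(x)=x/2+1$, and stochastic monotonicity is clear from the representation $Ux+Z$.

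\emph{Step 3 (initial distance).} Because $\mu(dt)=te^{-t}\,dt$ is the $\mathrm{Gamma}(2,1)$ law, $\mathbb{E}Y_0=2$, so $\mathbb{E}|x-Y_0|\le x+2$. Substituting into \eqref{eq:general-tv-bound} gives $d_{\mathrm{TV}}\le (x+2)/2^n$, which is at most the stated bound $(5/2)(x+2)/2^n$.

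The only genuinely non-routine point is the uniform bound on $\|\partial_xq_x\|_{L^1}$, namely controlling $(x-1+e^{-x})/x^2$ near $x=0$, where it tends to $1/2$. The Taylor inequality above handles this. If a cruder route were preferred, bounding the two pieces separately on $x\le1$ and $x>1$ would also give a constant below $5/2$.
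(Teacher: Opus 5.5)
Your proposal is correct and follows the paper's proof essentially step for step. You apply Proposition~\ref{prop:tv-general} with the Lipschitz constant from Lemma~\ref{lem:l1bound}, using the same $A$, $B$, $\Phi$ and the same formula $\|\partial_x q_x\|_{L^1}=2(x-1+e^{-x})/x^2$; you use the synchronous coupling $X_{n+1}=U_nX_n+Z_n$, $Y_{n+1}=U_nY_n+Z_n$ to get $r=1/2$; and you use $\mathbb{E}_\mu Y_0=2$.

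The only difference is that your inequality $e^{-x}\le 1-x+x^2/2$ gives $L=1$ where the paper settles for $L=5/2$, so you prove the stronger bound $(x+2)/2^n$. Your remark that $\Phi(t)=1-e^{-t}$ is not integrable on $(0,\infty)$, but that the lemma's proof only needs local integrability, is also correct and applies equally to the paper's own argument.
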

\begin{proof}
We verify the conditions of \Propref{tv-general}. First, we check
(\ref{eq:l1-lipschitz}) with $L=5/2$ using \Lemref{l1bound}. Write
\[
q_{\mathrm{SMC}}(t\mid x)=q_{x}(t):=A(x) \Phi(t) \mathbf{1}_{\{0<t<x\}}+B(x)e^{-t} \mathbf{1}_{\{t\ge x\}},
\]
where 
\[
A(x):=\frac{1}{x},\qquad B(x):=\frac{e^{x}-1}{x},\qquad\Phi(t):=1-e^{-t}.
\]
The matching condition holds because 
\[
A(x)\Phi(x)=\frac{1-e^{-x}}{x}=\frac{e^{x}-1}{x}e^{-x}=B(x)e^{-x}.
\]
Moreover, 
\[
\partial_{x}q_{x}(t)=-\frac{1-e^{-t}}{x^{2}} \mathbf{1}_{\{0<t<x\}}+B'(x)e^{-t} \mathbf{1}_{\{t>x\}},
\]
where 
\[
B'(x)=\frac{xe^{x}-e^{x}+1}{x^{2}}.
\]
Since 
\[
e^{-x}B'(x)=\frac{x-1+e^{-x}}{x^{2}}\ge0,
\]
the derivative is nonpositive on $(0,x)$ and nonnegative on $(x,\infty)$.
Because $\int^{\infty}_{0}q_{x}(t) dt=1$, differentiation gives
\[
\int^{\infty}_{0}\partial_{x}q_{x}(t) dt=0,
\]
and therefore 
\[
\|\partial_{x}q_{x}\|_{L^{1}}=2\int^{\infty}_{x}\partial_{x}q_{x}(t) dt=2e^{-x}B'(x)=2 \frac{x-1+e^{-x}}{x^{2}}.
\]
We claim that 
\[
2 \frac{x-1+e^{-x}}{x^{2}}\le\frac{5}{2}\qquad(x>0)
\]
which is equivalent to 
\[
f(x):=\frac{5}{2}x^{2}-2x+2-2e^{-x}\ge0.
\]
Indeed, $f(0)=0$ and 
\[
f'(x)=5x-2+2e^{-x},\qquad f''(x)=5-2e^{-x}\ge3>0.
\]
Thus $f'$ is increasing with $f'(0)=0$, so $f'(x)\ge0$ for all
$x\ge0$, hence $f(x)\ge0$ for all $x\ge0$. Therefore 
\[
\sup_{x>0}\|\partial_{x}q_{x}\|_{L^{1}}\le\frac{5}{2}.
\]
So condition (\ref{eq:l1-lipschitz}) holds with $L=5/2$.

Next, we verify condition (\ref{eq:contractive-coupling}). Using
the representation $X_{n+1}=U_{n}X_{n}+Z_{n}$ with $U_{n}\sim\mathrm{Unif}(0,1)$
and $Z_{n}\sim\mathrm{Exp}(1)$, we immediately have
\[
\mathbb{E}\left[|X_{n+1}-Y_{n+1}| \mid X_{n},Y_{n}\right]\le\frac{1}{2}|X_{n}-Y_{n}|.
\]
So (\ref{eq:contractive-coupling}) holds with $r=1/2$.

Applying \Propref{tv-general} therefore gives 
\[
d_{\mathrm{TV}}\!\bigl(K^{n}_{\mathrm{SMC}}(x,\cdot),\mu\bigr)\le\frac{5/2}{2}\left(\frac{1}{2}\right)^{n-1}\mathbb{E}|x-Y_{0}|,\qquad Y_{0}\sim\mu.
\]
Since $\mu$ has mean $2$, 
\[
\mathbb{E}|x-Y_{0}|\le x+\mathbb{E}[Y_{0}]=x+2.
\]
Therefore 
\[
d_{\mathrm{TV}}\!\bigl(K^{n}_{\mathrm{SMC}}(x,\cdot),\mu\bigr)\le\frac{5(x+2)}{2^{ n+1}},
\]
as claimed.
\end{proof}

Next we prove the analogous result for the jump chain of the SMC'
process.
\begin{thm}[Geometric ergodicity of the SMC' jump chain]
\label{thm:smcprime-geo} Let $K_{\mathrm{SMC}'}$ be the Markov
jump kernel with density $q_{\mathrm{SMC}'}(t\mid s)$ and invariant
law $\mu'$.
Then for every $n\ge1$,
\[
d_{\mathrm{TV}}\!\bigl(K^{n}_{\mathrm{SMC}'}(x,\cdot),\mu'\bigr)\le\frac{x+\frac{11}{6}}{2^{ n-1}}.
\]
In particular, the SMC' jump chain is geometrically ergodic in total
variation.
\end{thm}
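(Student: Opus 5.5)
The plan is to apply \Propref{tv-general} with $L=2$ and $r=1/2$. This gives
\[
d_{\mathrm{TV}}\bigl(K^{n}_{\mathrm{SMC}'}(x,\cdot),\mu'\bigr)\le r^{n-1}\,\mathbb{E}|x-Y_0|\le\frac{x+\mathbb{E}[Y_0]}{2^{n-1}},\qquad Y_0\sim\mu'.
\]
It then suffices to check $\mathbb{E}[Y_0]\le 11/6$. A direct computation gives
\[
\mathbb{E}[Y_0]=\tfrac{3}{8}\int_0^\infty t\,(2t+1-e^{-2t})e^{-t}\,dt=\tfrac{3}{8}\bigl(4+1-\tfrac14\bigr)=\tfrac{57}{32}<\tfrac{11}{6}.
\]
So the work is in verifying the two hypotheses of \Propref{tv-general}.

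\emph{Lipschitz condition.} I will use \Lemref{l1bound} with
\[
D(x):=2x+1-e^{-2x},\qquad A(x):=\frac{2}{D(x)},\qquad \Phi(t):=1-e^{-2t},\qquad B(x):=\frac{2(1-e^{-2x})e^{x}}{D(x)}.
\]
The matching condition $A(x)\Phi(x)=B(x)e^{-x}$ is immediate, and $A'(x)=-2D'(x)/D(x)^2<0$. The key claim is $B'(x)\ge0$, which reduces to an elementary inequality in $x$ after clearing denominators. As in the proof of \Thmref{cor:smc-jump-chain}, once the signs are known ($\partial_x q_x\le0$ on $(0,x)$ and $\ge0$ on $(x,\infty)$), the fact that $\int\partial_x q_x=0$ gives
\[
\|\partial_x q_x\|_{L^1}=2e^{-x}B'(x).
\]
I will then show $e^{-x}B'(x)\le1$ by the same device used for the SMC: multiply through, define the difference function $f$, check $f(0)\ge0$ and the signs of $f'$ and $f''$. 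Near $0$ a Taylor expansion is needed, since $D(x)\sim 4x$.

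\emph{Contraction.} I will use \Lemref{l1-contraction}. Stochastic monotonicity reduces to the same sign facts. For $y<x$ we have $F_x(y)=A(x)\int_0^y\Phi$, which is nonincreasing in $x$ because $A$ is decreasing. For $y\ge x$ we have $F_x(y)=1-B(x)e^{-y}$, which is nonincreasing in $x$ because $B$ is increasing. The mean map is
\[
m(x)=A(x)\int_0^x 2t(1-e^{-2t})\,dt\;\cdot\tfrac12\cdot 2+B(x)e^{-x}(x+1),
\]
which is an explicit elementary function. Monotonicity already gives $m'\ge0$, so I need $m'(x)\le 1/2$ for all $x>0$. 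Then the quantile coupling satisfies (\ref{eq:contractive-coupling}) with $r=1/2$.

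\emph{Main obstacle.} I expect the main obstacle to be the two calculus inequalities, $0\le e^{-x}B'(x)\le1$ and especially $m'(x)\le1/2$. Unlike the SMC case, there is no $UX+Z$ representation to give the contraction for free. My first attempt will be to write $m'$ over the common denominator $D(x)^2$ and reduce $m'\le1/2$ to nonnegativity of an exponential polynomial $g(x)$. I will then handle small $x$ by series expansion and large $x$ by dominance of the polynomial terms, checking derivative signs as in the SMC proof. If the constant $1/2$ turns out too tight on some interval, I will fall back to bounding $g$ separately on $[0,1]$ and $[1,\infty)$.
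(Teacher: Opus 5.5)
Your plan matches the paper's proof. Both use Proposition~\ref{prop:tv-general} with $L=2$ and $r=1/2$, the same split $A=2/D$, $B=2(e^{x}-e^{-x})/D$, $\Phi(t)=1-e^{-2t}$, the identity $\|\partial_x q_x\|_{L^1}=2e^{-x}B'(x)$, and the quantile coupling via stochastic monotonicity plus $m'\le 1/2$.

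Two of your explicit computations are wrong, however.

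First, $\int_0^\infty t e^{-3t}\,dt=1/9$, not $1/4$. So $\mathbb{E}_{\mu'}[Y_0]=\tfrac38\bigl(4+1-\tfrac19\bigr)=\tfrac{11}{6}$ exactly, not $\tfrac{57}{32}$. The inequality you need still holds, but with no slack: the constant in the theorem is precisely the stationary mean.

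Second, your mean map as written equals $2A(x)\int_0^x t(1-e^{-2t})\,dt+B(x)(x+1)e^{-x}$, which doubles the first term. The correct map is
\[
m(x)=A(x)\int_0^x t(1-e^{-2t})\,dt+B(x)(x+1)e^{-x}=\frac{x^{2}+2x+\frac32-(x+\frac32)e^{-2x}}{2x+1-e^{-2x}}.
\]
This is not cosmetic. With your formula $m(x)\sim x$ and $m'(x)\to1$ as $x\to\infty$, so no splitting of intervals could give $m'\le 1/2$. Your fallback plan would have been chasing a false inequality.

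With the correct $m$, $m'(x)\to 1/2$ both as $x\downarrow 0$ and as $x\to\infty$. The bound is therefore tight at both ends and must be proved exactly. The paper does this without series expansions or a separate large-$x$ regime. It writes $\tfrac12-m'(x)=R(x)/\bigl(2[(2x+1)e^{2x}-1]^{2}\bigr)$ with $R(x)=3e^{4x}-(4x^{2}+8x+2)e^{2x}-1$. Then $R(0)=0$ and $R'=4e^{2x}G$, where $G(x)=3e^{2x}-3-6x-2x^{2}$ satisfies $G(0)=G'(0)=0$ and $G''>0$. This is exactly the ``exponential polynomial'' reduction you intended, closed by a single derivative chain.

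For the Lipschitz constant, the paper reduces $2e^{-x}B'(x)\le 2$ to $h(x):=4x^{2}-8xz-z^{2}-2z+3\ge 0$ at $z=e^{-2x}$. It then uses that this expression is decreasing in $z$, together with $z\le(1+2x)^{-1}$. Your convexity device also works here: $h(0)=h'(0)=0$ and $h''(x)=8+e^{-2x}(24-32x-16e^{-2x})>0$. So that part is just a different way of finishing the same inequality.
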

\begin{proof}
We verify the conditions \Propref{tv-general}. First, we check (\ref{eq:l1-lipschitz})
with $L=2$ using \Lemref{l1bound}. Write 
\[
q_{\mathrm{SMC}'}(t\mid x)=q_{x}(t):=A(x) \Phi(t) \mathbf{1}_{\{0<t<x\}}+B(x)e^{-t} \mathbf{1}_{\{t\ge x\}},
\]
where 
\[
\begin{aligned}
A(x) &:= \frac{2}{D(x)}, \qquad & B(x) &:= \frac{2(e^{x}-e^{-x})}{D(x)},\\
\Phi(t) &:= 1-e^{-2t}, \qquad & D(x) &:= 2x+1-e^{-2x}.
\end{aligned}
\]
The matching condition holds because 
\[
\begin{aligned}
A(x)\Phi(x)
&= \frac{2(1-e^{-2x})}{D(x)}\\
&= \frac{2(e^{x}-e^{-x})}{D(x)}e^{-x}\\
&= B(x)e^{-x}.
\end{aligned}
\]
Moreover, 
\[
\partial_{x}q_{x}(t)=-\frac{4(1+e^{-2x})(1-e^{-2t})}{D(x)^{2}} \mathbf{1}_{\{0<t<x\}}+B'(x)e^{-t} \mathbf{1}_{\{t>x\}}.
\]
A calculation shows that
\[
e^{-x}B'(x)=\frac{2(1+e^{-2x})(2x-1+e^{-2x})}{D(x)^{2}}\ge0,
\]
since $2x-1+e^{-2x}\ge2x-1+(1-2x)=0$. Thus $\partial_{x}q_{x}(t)\le0$
on $(0,x)$ and $\partial_{x}q_{x}(t)\ge0$ on $(x,\infty)$. Because
$\int^{\infty}_{0}q_{x}(t) dt=1$, differentiation yields 
\[
\int^{\infty}_{0}\partial_{x}q_{x}(t) dt=0,
\]
and therefore 
\[
\begin{aligned}
\|\partial_{x}q_{x}\|_{L^{1}}
&= 2\int^{\infty}_{x}\partial_{x}q_{x}(t) dt\\
&= 2e^{-x}B'(x)\\
&= \frac{4(1+z)(2x-1+z)}{(2x+1-z)^{2}},
\qquad z:=e^{-2x}.
\end{aligned}
\]
We claim that 
\[
\frac{4(1+z)(2x-1+z)}{(2x+1-z)^{2}}\le2.
\]
This is equivalent to 
\[
(2x+1-z)^{2}-2(1+z)(2x-1+z)\ge0.
\]
Expanding gives 
\[
(2x+1-z)^{2}-2(1+z)(2x-1+z)=4x^{2}-8xz-z^{2}-2z+3=:\Phi_{x}(z).
\]
For fixed $x>0$, $\Phi_{x}$ is strictly decreasing in $z$: 
\[
\partial_{z}\Phi_{x}(z)=-8x-2z-2<0.
\]
Since $z=e^{-2x}\le(1+2x)^{-1}$, we have 
\[
\Phi_{x}(z)\ge\Phi_{x}\!\left(\frac{1}{1+2x}\right)=\frac{16x^{3}(x+1)}{(2x+1)^{2}}\ge0.
\]
Hence 
\[
\sup_{x>0}\|\partial_{x}q_{x}\|_{L^{1}}\le2.
\]
Lemma~\ref{lem:l1bound} now yields 
\[
\int^{\infty}_{0}\left|q_{a}(t)-q_{b}(t)\right| dt\le2|a-b|,\qquad a,b>0.
\]
So condition~(\ref{eq:l1-lipschitz}) holds with $L=2$.

Next, we verify condition~(\ref{eq:contractive-coupling}) via \Lemref{l1-contraction}. We have 
\[
F_{x}(y):=\int^{y}_{0}q_{x}(t) dt=\begin{cases}
\dfrac{2y-1+e^{-2y}}{D(x)}, & 0<y<x,\\[10pt]
1-B(x)e^{-y}, & y\ge x.
\end{cases}
\]
For fixed $y>0$, the map $x\mapsto F_{x}(y)$ is non-increasing:
if $x>y$, this is immediate from the first formula since $D$ is
increasing; if $x\le y$, this follows because 
$B'(x)\ge0$,
so $B$ is increasing and hence $1-B(x)e^{-y}$ is decreasing in $x$.
Thus the kernel is stochastically monotone. Its mean map is 
\[
m(x):=\int^{\infty}_{0}t q_{x}(t) dt=\frac{x^{2}+2x+\frac{3}{2}-(x+\frac{3}{2})e^{-2x}}{2x+1-e^{-2x}}.
\]
Differentiating yields
\[
\frac{1}{2}-m'(x)=\frac{R(x)}{2\left[(2x+1)e^{2x}-1\right]^{2}},
\]
where 
\[
R(x):=3e^{4x}-(4x^{2}+8x+2)e^{2x}-1.
\]
Now $R(0)=0$, and 
\[
R'(x)=4e^{2x}G(x),\qquad G(x):=3e^{2x}-3-6x-2x^{2}.
\]
Also $G(0)=G'(0)=0$, while 
\[
G''(x)=12e^{2x}-4>0.
\]
Hence $G'(x)\ge0$, so $G(x)\ge0$, and therefore $R'(x)\ge0$. Since
$R(0)=0$, it follows that $R(x)\ge0$ for all $x\ge0$. Thus 
\[
m'(x)\le\frac{1}{2}\qquad(x>0),
\]
and therefore 
\[
|m(a)-m(b)|\le\frac{1}{2}|a-b|,\qquad a,b>0.
\]
By \Lemref{l1-contraction}, the quantile coupling satisfies 
\[
\mathbb{E}\!\left[|X_{n+1}-Y_{n+1}| \mid X_{n},Y_{n}\right]\le\frac{1}{2}|X_{n}-Y_{n}|.
\]
So condition (\ref{eq:contractive-coupling}) holds with $r=1/2$.
Applying \Propref{tv-general} therefore gives 
\[
d_{\mathrm{TV}}\!\bigl(K^{n}_{\mathrm{SMC}'}(x,\cdot),\mu'\bigr)\le\frac{2}{2}\left(\frac{1}{2}\right)^{n-1}\mathbb{E}|x-Y_{0}|,\qquad Y_{0}\sim\mu'.
\]
Finally, 
\[
\mathbb{E}|x-Y_{0}|\le x+\mathbb{E}_{\mu'}[Y_{0}].
\]
Since 
\[
\mathbb{E}_{\mu'}[Y_{0}]=\frac{3}{8}\int^{\infty}_{0}y(2y+1-e^{-2y})e^{-y} dy=\frac{11}{6},
\]
we obtain 
\[
d_{\mathrm{TV}}\!\bigl(K^{n}_{\mathrm{SMC}'}(x,\cdot),\mu'\bigr)\le\frac{x+\frac{11}{6}}{2^{ n-1}},
\]
as claimed.
\end{proof}

\subsection{The continuous processes}

Next we turn our attention to the continuous processes into which the
above jump chains are embedded. For both continuous processes,
we write $\pi(dy)=e^{-y} dy$ for the stationary law.

\subsubsection{SMC process}

Recall from Definition \ref{def:smc} the definition of the SMC process
$(Y_{\ell})_{\ell\ge0}.$ For bounded measurable $f:(0,\infty)\to\mathbb{R}$
define the jump operator 
\begin{equation}
Jf(y):=\mathbb{E}\bigl[f(Uy+Z)\bigr].\label{eq:Jf(y)}
\end{equation}
It follows from the definition that the generator $A$ of $(Y_{\ell})$
is 
\begin{equation}
Af(x)=x\bigl(Jf(x)-f(x)\bigr),\label{eq:generator}
\end{equation}
for all $f$ in the generator domain.

Using a short probabilistic argument, \textcite{durrett2008probability} derived the following expression for the transition kernel of the SMC process. As a mathematical curiosity, in the appendix we give an alternative proof of this result by deriving the Laplace transform of the transition kernel. This analytic approach might prove useful for analyzing other genealogical processes where the probabilistic approach fails.

\begin{prop}
\label{thm:smc-transition}Let $P_{\ell}(x,dy)$ denote the transition
kernel of the SMC process. Then 
\begin{equation}
P_{\ell}(x,dy)=e^{-\ell x} \delta_{x}(dy)+p_{\ell}(x,y) dy,\label{eq:P_ell}
\end{equation}
where the density $p_{\ell}(x,y)$ is 
\begin{equation}
p_{\ell}(x,y)=\begin{cases}
\frac{\ell}{\ell-1}e^{-y}\left(1-e^{-(\ell-1)\min(x,y)}\right), & \ell\neq1\\
e^{-y}\min(x,y), & \ell=1.
\end{cases}\label{eq:p_c}
\end{equation}
\end{prop}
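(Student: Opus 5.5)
The plan is to prove the formula by a first-jump (renewal) decomposition, checked through the Kolmogorov backward equation. Starting from $Y_0=x$, the process stays at $x$ until its first jump, which occurs at rate $x$. With probability $e^{-\ell x}$ there is no jump in $[0,\ell]$, and this produces the atom $e^{-\ell x}\delta_x(dy)$. Everything else is absolutely continuous, because the jump law $Ux+Z$ has a density; indeed it is exactly $q_{\mathrm{SMC}}(\cdot\mid x)$ from (\ref{eq:q(t|s)}). So it remains to identify the density $p_\ell(x,y)$.

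The cleanest way is probabilistic and avoids solving a full integral equation. Condition on the \emph{last} relevant feature: whether the lineage's TMRCA at $\ell$ still ``remembers'' $x$. Concretely, I would use the standard construction of the SMC for two lineages in which recombination points fall on the current tree at rate $y$ per unit $\ell$. At each jump the new height $Uy+Z$ keeps the portion of the old tree below the cut $Uy$ and adds a fresh exponential above it. Then $Y_\ell>t$ for $t<x$ and $\ell>0$ is governed by whether the segment $[0,t]$ of the tree has been hit by a recombination. I would use the following key observation: the event $\{Y_\ell\le y\}$ can be described via the minimum cut height $M_\ell$ over all recombinations in $[0,\ell]$. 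Given $M_\ell=m$, the height above $m$ is a fresh $\mathrm{Exp}(1)$, independent of the past, by memorylessness.

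The main computation, and the expected obstacle, is the law of $M_\ell$. The cut points form a Poisson process on $[0,\ell]\times(0,\infty)$ with intensity $d\ell'\,du$ restricted to $u<Y_{\ell'}$. However, $Y$ changes after each cut, so this is not literally Poisson. I would instead verify the candidate formula directly. Define $P_\ell$ by (\ref{eq:P_ell})--(\ref{eq:p_c}), and check three things:
\begin{itemize}
\item $P_0=\delta_x$;
\item the total mass is $1$, which is a direct integration;
\item the backward equation $\partial_\ell P_\ell g(x)=x\bigl(J P_\ell g(x)-P_\ell g(x)\bigr)$ holds for bounded $g$, using the generator (\ref{eq:generator}).
\end{itemize}
For the third check, compute $JP_\ell g(x)=\int_0^1\int_0^\infty P_\ell g(ux+z)e^{-z}\,dz\,du$ for $g=\mathbf 1_{(0,y]}$ and compare with the $\ell$-derivative. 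The pieces split by whether $y<x$ or $y\ge x$, and each side reduces to elementary exponential integrals in $\min(x,y)$. The singular case $\ell=1$ follows as a limit $\ell\to1$ by continuity.

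For uniqueness, note that the rate function $r(y)=y$ is unbounded, so I would argue non-explosion. The jump chain has $\E[X_{n+1}\mid X_n]=X_n/2+1$, so it is tight. The holding times therefore have $\sum_n E_n/X_n=\infty$ a.s., since $X_n$ is recurrent to compact sets; this uses the geometric ergodicity of \Thmref{cor:smc-jump-chain}. Hence the minimal solution of the backward equation is honest and equals the transition kernel. The candidate is a probability solution, so it coincides with the minimal one. The laborious but routine part is the double integral in the backward-equation check. The conceptual obstacle is justifying uniqueness under unbounded rates, which the non-explosion argument handles.
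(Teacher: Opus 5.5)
Your proof is correct, but it runs in the opposite direction from the paper's. The paper (appendix) \emph{derives} the kernel. It applies Dynkin's formula to $f_\lambda(y)=e^{-\lambda y}$, for which $Af_\lambda$ is bounded and $\mathbb{E}_x[Y_\ell e^{-\lambda Y_\ell}]=-\partial_\lambda\phi$. The forward equation therefore closes into the transport PDE $\partial_\ell\phi-\partial_\lambda\phi=(1-\phi)/(\lambda(1+\lambda))$. Integrating along the characteristics $\lambda+\ell=\mathrm{const}$ gives (\ref{eq:phi}), and Laplace inversion gives (\ref{eq:P_ell})--(\ref{eq:p_c}). Because this works on the law of the process itself, the answer need not be guessed, and uniqueness comes from the characteristics.

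You instead take the formula as given, verify it against the backward equation, and import uniqueness from Feller's minimal-solution theory. With $P_0=\delta_x$, the differential backward equation integrates (multiply by $e^{\ell x}$) to the backward integral equation. Every nonnegative solution of that equation dominates the minimal solution. Once non-explosion makes the minimal solution honest, two probability kernels, one dominating the other, must coincide. This is more portable, since it does not need a family of test functions on which the generator closes up. It also makes explicit the non-explosion that the paper leaves implicit in its jump-chain construction.

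You can shrink the ``laborious'' check by running it on $g=e^{-\lambda\cdot}$ rather than on indicators. The candidate's Laplace transform is (\ref{eq:phi}), i.e.\ $1-c\,(1-e^{-ax})$ with $a=\lambda+\ell$ and $c=\lambda(1+a)/((1+\lambda)a)$. The identity $\mathbb{E}[e^{-a(Ux+Z)}]=(1-e^{-ax})/(ax(1+a))$ then makes both sides of the backward equation equal to $\lambda(1-e^{-ax})/((1+\lambda)a^{2})-c\,x\,e^{-ax}$.

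Incidentally, the idea you abandoned also works and gives a short probabilistic proof. Let $M_{\ell'}$ be the running minimum cut height, with $M_0=x$. Since always $Y_{\ell'}\ge M_{\ell'}$, the strip below $M_{\ell'}$ is always fully exposed. So $\{M_\ell>m\}$ is exactly the event that the driving Poisson field of cuts (intensity $d\ell'\,dh$) has no point in $[0,\ell]\times(0,m)$, which gives $\mathbb{P}(M_\ell>m)=e^{-\ell m}$ for $m<x$. The excess $Y-M$ evolves as an SMC process restarted from a fresh $\mathrm{Exp}(1)$ at each new minimum. By stationarity of $e^{-y}\,dy$ (not just memorylessness), it is $\mathrm{Exp}(1)$ and independent of $M_\ell$ on $\{M_\ell<x\}$. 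Convolving gives $\int_0^{\min(x,y)}\ell e^{-\ell m}e^{-(y-m)}\,dm=p_\ell(x,y)$.
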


Using the joint density representation, we may show that, in contrast
to its embedded jump chain, the SMC process is only polynomially ergodic.
\begin{thm}
\label{thm:smc-tv-sharp} Fix $x>0$. Let $(Y_{\ell})_{\ell\ge0}$
be the SMC process with transition kernel $P_{\ell}(x,\cdot)$ and
stationary law $\pi$. Then there exists $\ell_{0}(x)$
such that for all $\ell\ge \ell_{0}(x)$, 
\[
\frac{1}{2\ell}\le d_{\mathrm{TV}}\!\bigl(P_{\ell}(x,\cdot),\pi\bigr)\le\frac{1}{\ell}.
\]
In particular, 
\[
d_{\mathrm{TV}}\!\bigl(P_{\ell}(x,\cdot),\pi\bigr)\asymp\frac{1}{\ell}\qquad\text{as }\ell\to\infty.
\]
\end{thm}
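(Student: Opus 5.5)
We compute the total variation distance directly from the explicit kernel in \Propref{smc-transition}. The kernel has an atom of mass $e^{-\ell x}$ at $x$, and $\pi$ has none, so
\[
2\,d_{\mathrm{TV}}\bigl(P_{\ell}(x,\cdot),\pi\bigr)=e^{-\ell x}+\int_{0}^{\infty}\bigl|p_{\ell}(x,y)-e^{-y}\bigr|\,dy .
\]
For $\ell>1$, write $c:=\ell-1>0$ and $m:=\min(x,y)$. Then
\[
p_{\ell}(x,y)-e^{-y}=e^{-y}\Bigl(\tfrac{\ell}{\ell-1}\bigl(1-e^{-cm}\bigr)-1\Bigr)=\frac{e^{-y}}{\ell-1}\Bigl(1-\ell e^{-cm}\Bigr).
\]
This depends on $y$ only through $m$. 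It is negative when $m<m^{*}:=\log\ell/(\ell-1)$ and positive when $m>m^{*}$. Since $m^{*}\to0$, for $\ell$ large (depending on $x$) we have $m^{*}<x$. So the difference is negative on $y<m^{*}$ and positive on $y>m^{*}$, where on $y\ge x$ it equals $\frac{e^{-y}}{\ell-1}(1-\ell e^{-cx})$.

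Rather than integrate absolute values piece by piece, use the sign structure. Total mass is conserved: $\int (p_\ell-e^{-y})\,dy=-e^{-\ell x}$. Hence the integral of the absolute value equals $2\int_{y>m^*}(p_\ell-e^{-y})\,dy+e^{-\ell x}$, and therefore
\[
d_{\mathrm{TV}}=\int_{m^{*}}^{\infty}\bigl(p_{\ell}(x,y)-e^{-y}\bigr)\,dy+e^{-\ell x}=\int_{0}^{m^{*}}\bigl(e^{-y}-p_\ell(x,y)\bigr)\,dy .
\]
The second form is the simpler one to estimate:
\[
d_{\mathrm{TV}}=\frac{1}{\ell-1}\int_{0}^{m^{*}}e^{-y}\bigl(\ell e^{-cy}-1\bigr)\,dy=\frac{1}{\ell-1}\Bigl[\frac{\ell}{\ell}\bigl(1-e^{-\ell m^{*}}\bigr)-\bigl(1-e^{-m^{*}}\bigr)\Bigr].
\]
This uses $\int_0^{m^*}\ell e^{-\ell y}\,dy=1-e^{-\ell m^*}$. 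Since $e^{-\ell m^{*}}=\ell^{-\ell/(\ell-1)}$, we get
\[
d_{\mathrm{TV}}=\frac{1}{\ell-1}\Bigl[e^{-m^{*}}-\ell^{-\ell/(\ell-1)}\Bigr]
\]
in closed form, valid as soon as $m^*<x$.

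It remains to read off the asymptotics. We have $m^{*}=\log\ell/(\ell-1)\to0$, so $e^{-m^{*}}=1-O(\log\ell/\ell)$, and $\ell^{-\ell/(\ell-1)}\le 1/\ell$. Therefore
\[
d_{\mathrm{TV}}=\frac{1}{\ell}\bigl(1+o(1)\bigr)\cdot\frac{\ell}{\ell-1}\bigl(1-O(\log\ell/\ell)\bigr).
\]
\begin{itemize}
\item \emph{Upper bound.} Note $e^{-m^*}\le 1-m^*+(m^*)^2/2$. We need $\frac{1}{\ell-1}\bigl(e^{-m^*}-\ell^{-\ell/(\ell-1)}\bigr)\le 1/\ell$, which is equivalent to $\ell e^{-m^{*}}\le \ell-1+\ell\cdot\ell^{-\ell/(\ell-1)}$. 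The left side is $\ell-\log\ell\cdot\frac{\ell}{\ell-1}+O(\log^2\ell/\ell)$. This is at most $\ell-1$ once $\log\ell>1+o(1)$, so the inequality holds for $\ell$ moderately large.
\item \emph{Lower bound.} $\frac{1}{\ell-1}\bigl(e^{-m^*}-1/\ell\bigr)\ge 1/(2\ell)$ holds once $e^{-m^*}\ge 1/2+1/\ell$, which is true for large $\ell$.
\end{itemize}
Taking $\ell_0(x)$ to be the maximum of these thresholds and the threshold ensuring $m^{*}<x$ gives the claim.

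The main obstacle is the sign analysis in the first step, namely making sure the crossing point $m^*$ lies below $x$. If $x$ is tiny, the crossing could fall into the region $y\ge x$, and the formula would change. This is exactly why $\ell_0$ depends on $x$: we need $\log\ell/(\ell-1)<x$. The remaining work is elementary inequalities for explicit functions of $\ell$.
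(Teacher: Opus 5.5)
Your proposal is correct and follows essentially the same route as the paper: the same sign change at $m^{*}=\log\ell/(\ell-1)<x$, the same reduction of $d_{\mathrm{TV}}$ to the negative-part integral $\int_0^{m^{*}}\bigl(e^{-y}-p_\ell(x,y)\bigr)\,dy$, and the same closed form $\frac{1}{\ell-1}\bigl(e^{-m^{*}}-e^{-\ell m^{*}}\bigr)$. The only difference is the last step. The paper notes that $e^{-\ell m^{*}}=\ell^{-\ell/(\ell-1)}=e^{-m^{*}}/\ell$, so $d_{\mathrm{TV}}=\ell^{-1/(\ell-1)}/\ell$ exactly; the upper bound is then immediate, and the lower bound holds for all $\ell\ge2$ because $\log\ell/(\ell-1)$ is decreasing, which makes your asymptotic expansions unnecessary.
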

\begin{proof}
Fix $x>0$ and assume $\ell>1$. We have
\[
P_{\ell}(x,\cdot)-\pi=e^{-\ell x}\delta_{x}+\bigl(p_{\ell}(x,y)-e^{-y}\bigr) dy,
\]
and 
\[
e^{-y}-p_{\ell}(x,y)=\frac{e^{-y}}{\ell-1}\Bigl(\ell e^{-(\ell-1)\min(x,y)}-1\Bigr).
\]
Let 
\[
y_{*}:=\frac{\log \ell}{\ell-1},
\]
which is smaller than $x$ for all $\ell>\ell_0(x)$.
Therefore:
\begin{itemize}
\item For $0<y<y_{*}<x$, we have $\min(x,y)=y$ and 
\[
\ell e^{-(\ell-1)\min(x,y)}
> \ell e^{-(\ell-1)y_*}
=1,
\]
so $e^{-y}-p_{\ell}(x,y)>0$;
\item For $y>y_{*}$, we have $\min(x,y)>y_{*}$ and therefore
\[
\ell e^{-(\ell-1)\min(x,y)}<
\ell e^{-(\ell-1)y_*}
=1,
\]
so $e^{-y}-p_{\ell}(x,y)<0$. 
\end{itemize}
Thus the absolutely continuous part of $P_{\ell}(x,\cdot)-\pi$ is negative
on $(0,y_{*})$ and positive on $(y_{*},\infty)$, while the atom
$e^{-\ell x}\delta_{x}$ is positive.
Since $P_{\ell}(x,\cdot)-\pi$ has total mass zero, the total variation
norm equals the mass of either the positive or negative part. Choosing the negative part, we get
\begin{align*}
d_{\mathrm{TV}}\!\bigl(P_{\ell}(x,\cdot),\pi\bigr)&=\int^{y_{*}}_{0}\bigl(e^{-y}-p_{\ell}(x,y)\bigr) dy \\
 & =\frac{1}{\ell-1}\int^{y_{*}}_{0}\Bigl(\ell e^{-\ell y}-e^{-y}\Bigr) dy\\
 & =\frac{e^{-y_{*}}-e^{-\ell y_{*}}}{\ell-1}\\
 & =\frac{\ell^{-1/(\ell-1)}\left(1-\ell^{-1}\right)}{\ell-1}\\
 & =\frac{\ell^{-1/(\ell-1)}}{\ell}.
\end{align*}
Since $h(\ell):=\frac{\log \ell}{\ell-1}$
is decreasing for $\ell \ge 2$,
\[
\ell^{-1/(\ell-1)}=e^{-h(\ell)}\in\left[\frac{1}{2},1\right].
\]
Therefore, after increasing $\ell_{0}(x)$ if necessary so that $\ell_{0}(x)\ge2$,
we obtain 
\[
\frac{1}{2\ell}\le d_{\mathrm{TV}}\!\bigl(P_{\ell}(x,\cdot),\pi\bigr)\le\frac{1}{\ell},\qquad \ell\ge \ell_{0}(x),
\]
as claimed.
\end{proof}

\subsubsection{SMC' process}

Next we turn our attention to the SMC' process (\Defref{smc-prime}).
Recall that under this model, recombination still occurs at rate $x$,
but some fraction of recombinations are ``silent'' and do not result
in TMRCA changing; see (\ref{eq:p_vis}).
The effective rate of ``visible'' recombination (i.e.,~one which
causes TMRCA to change from $s$ to $t\neq s$) is $s p_{\mathrm{vis}}(s)$.
Conditional on a visible recombination, the post-jump density is $q_{\mathrm{SMC}'}(t\mid s)$.
Denote by $A'$ the generator of the SMC' process:
\begin{equation}
A'f(x)=\lim_{\ell\to0}\frac{\mathbb{E}f(Y'_{\ell})-f(x)}{\ell}=x\int^{\infty}_{0}\left[f(y)-f(x)\right]m_{x}(y) dy\label{eq:A' generator}
\end{equation}
where $m_{x}(y)=p_{\mathrm{vis}}(x)q_{\mathrm{SMC}'}(y\mid x)$. 

Extending the proof techniques of the previous section to the SMC'
model is complicated by the fact that innovations under this model
no longer have the nice algebraic form (\ref{eq:Jf(y)}). Instead
we proceed differently, by showing how to obtain the SMC' process
by a random time change of the SMC process.
\begin{thm}
Let $(Y_{\ell})_{\ell\ge0}$ denote the SMC process from Definition~\ref{def:smc}.
Let $N_{\ell}$ be a Poisson process of rate $1/4$ independent of $(Y_{\ell})$.
Define
the increasing process,
\begin{equation}
C_{\ell}:=\frac{\ell}{2}+2N_{\ell}\label{eq:C_ell}
\end{equation}
and the subordinated process $Y'_{\ell}=Y_{C_{\ell}}$. Then $Y'_{\ell}$
is equal in law to the SMC' process defined above. 
\end{thm}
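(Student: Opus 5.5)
The plan is to identify both processes as pure-jump Markov processes on $(0,\infty)$ and show that they have the same jump-rate kernel. The rate kernel of the SMC' process is read off from (\ref{eq:A' generator}). The rate kernel of the time-changed process $Y_{C_\ell}$ is computed by Bochner subordination. A uniqueness argument for jump processes with a given rate kernel then finishes the proof.

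\textbf{Step 1: $Y_{C_\ell}$ is Markov, and its generator.} The process $C_\ell$ is a Lévy subordinator independent of $(Y_\ell)$. It has drift $1/2$ and Lévy measure $\tfrac14\delta_2$. Hence $Y'_\ell:=Y_{C_\ell}$ is a time-homogeneous Markov process whose semigroup is $\mathbb{E}[P_{C_\ell}]$. Its generator, at least on bounded $f$ in the domain of $A$, is
\[
\tfrac12 Af(x)+\tfrac14\bigl(P_2f(x)-f(x)\bigr).
\]
This is the standard Bochner formula. It can be checked directly by conditioning on whether $N$ has a jump in $[0,h]$: with probability $1-h/4+o(h)$ there is none and $C_h=h/2$, and with probability $h/4+o(h)$ there is one and $C_h\approx2$.

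\textbf{Step 2: compare rate kernels.} Both terms in the generator are of jump type. First, from (\ref{eq:generator}) and (\ref{eq:q(t|s)}),
\[
\tfrac12 Af(x)=\tfrac12\int_0^\infty \bigl(f(y)-f(x)\bigr)\,e^{-y}\bigl(e^{x\wedge y}-1\bigr)\,dy .
\]
Second, by \Propref{smc-transition} with $\ell=2$, the atom at $x$ cancels against $f(x)$, giving
\[
\tfrac14\bigl(P_2f-f\bigr)(x)=\tfrac14\int_0^\infty\bigl(f(y)-f(x)\bigr)\,2e^{-y}\bigl(1-e^{-x\wedge y}\bigr)\,dy .
\]
Adding the two densities gives the combined rate density:
\begin{itemize}
\item for $y<x$: $\tfrac12(1-e^{-y})+\tfrac12 e^{-y}(1-e^{-y})=\tfrac12(1-e^{-2y})$;
\item for $y>x$: $\tfrac12 e^{-y}(e^x-1)+\tfrac12 e^{-y}(1-e^{-x})=\tfrac12(1-e^{-2x})e^{-(y-x)}$.
\end{itemize}
On the SMC' side, (\ref{eq:p_vis}) and the formula for $q_{\mathrm{SMC}'}$ give
\[
x\,m_x(y)=x\,p_{\mathrm{vis}}(x)\,q_{\mathrm{SMC}'}(y\mid x)=\tfrac{D(x)}{4}\,q_{\mathrm{SMC}'}(y\mid x),
\]
which equals exactly the same two expressions. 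Hence the two generators coincide on bounded measurable $f$. In particular, the total visible jump rate $x\,p_{\mathrm{vis}}(x)=D(x)/4$ and the normalized jump law $q_{\mathrm{SMC}'}$ match.

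\textbf{Step 3: equality in law (expected main obstacle).} The total jump rate $D(x)/4$ is unbounded, so agreement of generators on a core does not immediately give equality in law. I would argue pathwise instead.

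First, $Y_{C_\ell}$ is almost surely piecewise constant with finitely many jumps on compacts. Indeed, $Y$ has finitely many jumps on $[0,C_\ell]$ by Definition~\ref{def:smc}, and the jump chain has geometric drift, so it does not explode. A small technical point: a jump of $N$ can carry $Y$ from $x$ back to $x$ (the atom $e^{-2x}\delta_x$); such events are simply not jumps of $Y'$.

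By the strong Markov property of $Y'$ at its jump times, $Y'$ is the minimal jump process with rate kernel $x\,m_x(y)\,dy$. The law of a non-explosive minimal jump process is determined by its rate kernel: at state $x$ the holding time is exponential with rate $x\,p_{\mathrm{vis}}(x)$, and the next state is drawn from $q_{\mathrm{SMC}'}(\cdot\mid x)$. This is precisely the construction of the SMC' process in \Defref{smc-prime}.

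Finally, non-explosion of the SMC' process follows from the mean contraction $m'(x)\le1/2$ established in the proof of \Thmref{smcprime-geo}. This contraction makes $\sum_n 1/(X'_n p_{\mathrm{vis}}(X'_n))=\infty$ almost surely, so the two laws agree.
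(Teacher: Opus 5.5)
Your proposal is correct and follows the same route as the paper. Both compute the subordinated generator $\tfrac12 Af+\tfrac14(P_2-I)f$ by conditioning on whether $N$ jumps in a short interval. Both then match the resulting jump densities, $\tfrac12(1-e^{-2y})$ for $y<x$ and $\tfrac12(1-e^{-2x})e^{-(y-x)}$ for $y\ge x$, together with the total rate $x\,p_{\mathrm{vis}}(x)$, against the SMC' generator.

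Your Step~3 goes further than the paper, which simply stops at $\tilde A=A'$. It passes from equality of jump-rate kernels to equality in law, using the pure-jump structure of $Y_{C_\ell}$ and non-explosion. Note that non-explosion of the SMC' construction already follows once $Y_{C_\ell}$ has finitely many jumps on compacts, so the separate mean-contraction argument is not needed.
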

\begin{proof}
Let $(P_{\ell}f)(x):=\mathbb{E}_{x}[f(Y_{\ell})]$ be the semigroup of the
ordinary SMC process, where $A$ is the generator (\ref{eq:generator}).
Define the subordinated semigroup $(P'_{\ell}f)(x):=\mathbb{E}\left[(P_{C_{\ell}}f)(x)\right]$
and let $a=\ell/4$. Since $N_{\ell}\sim\mathrm{Poisson}(a)$,
\[
P'_{\ell}f=(1-a)P_{2a}f+aP_{2(a+1)}f+o(\ell),\quad\text{as \ensuremath{\ell\to0}.}
\]
For the first term, 
\[
(1-a)P_{2a}f=(1-a)[f+2aA f+o(\ell)]=(1-a)f+2aA f+o(\ell).
\]
For the second term,
\[
aP_{2(a+1)}f=aP_{2}f+o(\ell).
\]
Hence,
\[
P_{\ell}'=f+a(P_{2}-I)f+2aA f+o(\ell)
\]
so that the generator of the subordinated process is
\[
\tilde{A}:=\lim_{\ell\to0}\frac{P'_{\ell}f-f}{\ell}=\frac{1}{2}\left[A f+\frac{1}{2}\left(P_{2}-I\right)f\right].
\]

We now compute this operator explicitly and show that it agrees with
$A'$ defined in (\ref{eq:A' generator}). First, by definition of
$A$, 
\[
\frac{1}{2}A f(x)=\frac{1}{2}\int^{\infty}_{0}[f(y)-f(x)] xq_{\mathrm{SMC}}(y\mid x) dy.
\]
By \Thmref{smc-transition}, 
\[
P_{2}(x,dy)=e^{-2x}\delta_{x}(dy)+p_{2}(x,y) dy,
\]
where 
\[
p_{2}(x,y)=\begin{cases}
2(e^{-y}-e^{-2y}), & 0<y<x,\\[6pt]
2(1-e^{-x})e^{-y}, & y\ge x.
\end{cases}
\]
Thus 
\[
\frac{1}{4}(P_{2}-I)f(x)=\frac{1}{4}(e^{-2x}-1)f(x)+\frac{1}{4}\int^{\infty}_{0}p_{2}(x,y)f(y) dy.
\]
Now we identify the off-diagonal jump density of $\tilde{A}$.
For $0<y<x$, using 
\[
xq_{\mathrm{SMC}}(y\mid x)=1-e^{-y},\qquad p_{2}(x,y)=2(e^{-y}-e^{-2y}),
\]
the coefficient of $f(y)$ in $\tilde{A}f(x)$ is 
\[
\frac{1}{2}(1-e^{-y})+\frac{1}{4}\cdot2(e^{-y}-e^{-2y})=\frac{1}{2}(1-e^{-2y}).
\]
Similarly, for $y\ge x$, using 
\[
xq_{\mathrm{SMC}}(y\mid x)=(e^{x}-1)e^{-y},\qquad p_{2}(x,y)=2(1-e^{-x})e^{-y},
\]
the coefficient of $f(y)$ is 
\[
\frac{1}{2}(e^{x}-1)e^{-y}+\frac{1}{4}\cdot2(1-e^{-x})e^{-y}=\frac{1}{2}(e^{x}-e^{-x})e^{-y}=\frac{1}{2}(1-e^{-2x})e^{-(y-x)}.
\]
Thus the off-diagonal jump measure of $\tilde{A}$ is $x m_{x}(y) dy,$where
\[
m_{x}(y)=\begin{cases}
\dfrac{1-e^{-2y}}{2x}, & 0<y<x,\\[10pt]
\dfrac{1-e^{-2x}}{2x}e^{-(y-x)}, & y\ge x,
\end{cases}
\]
exactly as in the definition of the SMC' generator.

It remains to check the diagonal coefficient. From the two terms above,
the coefficient of $-f(x)$ is 
\[
\frac{1}{2}x+\frac{1}{4}(1-e^{-2x})=x\left(\frac{1}{2}+\frac{1-e^{-2x}}{4x}\right)=x p_{\mathrm{vis}}(x),
\]
where $p_{\mathrm{vis}}(x)$ was defined in (\ref{eq:p_vis}). Adding
together these terms and simplifying, we obtain 
\[
\tilde{A}f(x)=x\int^{\infty}_{0}[f(y)-f(x)] m_{x}(y) dy=A'f(x).
\]
\end{proof}

The preceding result allows us to use what we already know about ergodicity
of the SMC process to establish the same type of result for the SMC'
process. 

\begin{thm}
\label{thm:smcprime-tv-theta} Let $(Y'_{\ell})_{\ell\ge0}$ be the
SMC' process, and let $P'_{\ell}(x,\cdot)$ denote its
transition kernel and $\pi$ its stationary law. Then
for every fixed $x>0$, 
\[
d_{\mathrm{TV}}\!\bigl(P'_{\ell}(x,\cdot),\pi\bigr)\asymp\frac{1}{\ell}\qquad\text{as }\ell\to\infty.
\]
\end{thm}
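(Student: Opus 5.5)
The plan is to transfer Theorem~\ref{thm:smc-tv-sharp} to SMC' through the random time change of the previous theorem. With $C_\ell=\ell/2+2N_\ell$ and $N_\ell$ independent of $(Y_\ell)$, conditioning on $N_\ell$ gives the mixture representation
\[
P'_\ell(x,\cdot)=\mathbb{E}\bigl[P_{C_\ell}(x,\cdot)\bigr].
\]
Here the expectation is over $N_\ell\sim\mathrm{Poisson}(\ell/4)$. Two facts about the time change drive the argument: $C_\ell\ge\ell/2$ deterministically, and $\mathbb{E}C_\ell=\ell/2+2\cdot\ell/4=\ell$. I also use that $\pi(dy)=e^{-y}\,dy$ is stationary for both processes, as stated in the paper.

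\emph{Upper bound.} Total variation distance to a fixed measure is convex in its first argument. Hence $d_{\mathrm{TV}}(P'_\ell(x,\cdot),\pi)\le\mathbb{E}\,d_{\mathrm{TV}}(P_{C_\ell}(x,\cdot),\pi)$. If $\ell\ge2\ell_0(x)$, then $C_\ell\ge\ell/2\ge\ell_0(x)$ almost surely. Theorem~\ref{thm:smc-tv-sharp} then gives $d_{\mathrm{TV}}(P_{C_\ell}(x,\cdot),\pi)\le1/C_\ell\le2/\ell$. So $d_{\mathrm{TV}}(P'_\ell(x,\cdot),\pi)\le2/\ell$ for all large $\ell$.

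\emph{Lower bound.} This is the step needing care, because averaging over $C_\ell$ could in principle cancel the discrepancies seen at different values of $C_\ell$. I avoid this by using one fixed test set on which every $P_c$ undershoots $\pi$. Take $A=(0,a)$ with $0<a<x$ and $c>1$. By Proposition~\ref{thm:smc-transition}, the atom of $P_c(x,\cdot)$ sits at $x\notin A$. Integrating $e^{-y}-p_c(x,y)=\frac{e^{-y}}{c-1}\bigl(ce^{-(c-1)y}-1\bigr)$ over $A$ then gives
\[
\pi(A)-P_c(x,A)=\frac{e^{-a}-e^{-ca}}{c-1}\ \ge 0 .
\]
This is nonnegative for every $c>1$, so no cancellation occurs in the mixture. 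Now choose $a=1/\ell$, which is less than $x$ once $\ell$ is large. On the event $C_\ell\ge\ell/2$, which holds surely, we have $e^{-a}-e^{-C_\ell a}\ge e^{-1/\ell}-e^{-1/2}\ge\kappa>0$ for all large $\ell$, with $\kappa$ an absolute constant. Therefore
\[
d_{\mathrm{TV}}(P'_\ell(x,\cdot),\pi)\ \ge\ \pi(A)-P'_\ell(x,A)=\mathbb{E}\Bigl[\frac{e^{-a}-e^{-C_\ell a}}{C_\ell-1}\Bigr]\ \ge\ \kappa\,\mathbb{E}\Bigl[\frac{1}{C_\ell-1}\Bigr].
\]
The function $c\mapsto1/(c-1)$ is convex on $(1,\infty)$, and $C_\ell>1$ once $\ell>2$. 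Jensen's inequality then gives $\mathbb{E}[1/(C_\ell-1)]\ge1/(\mathbb{E}C_\ell-1)=1/(\ell-1)\ge1/\ell$.

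Combining the two bounds, $\kappa/\ell\le d_{\mathrm{TV}}(P'_\ell(x,\cdot),\pi)\le2/\ell$ for all $\ell$ beyond a threshold depending on $x$, which is the claim. The only real obstacle was the lower bound. It is resolved by the observation that, for every $c>1$, $P_c(x,\cdot)$ has a deficit relative to $\pi$ on every small initial interval, so the mixture keeps that deficit.
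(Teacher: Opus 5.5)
Your proof is correct. The upper bound is the paper's argument: convexity of total variation, $C_\ell\ge\ell/2$, and Theorem~\ref{thm:smc-tv-sharp}. Your lower bound takes a different route.

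The paper works pointwise. It shows $e^{-y}-p_r(x,y)\ge 1/(2e^{2})$ for $y\in(0,2/(3\ell))$, uniformly over $r$ in the window $I_\ell=[\ell/2,3\ell/2]$. It then uses Chebyshev to get $\mathbb{P}(C_\ell\in I_\ell)\ge 1/2$.

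You instead integrate over the test interval first. This gives the exact deficit $\pi(A)-P_c(x,A)=(e^{-a}-e^{-ca})/(c-1)$, which is nonnegative for every $c>1$. So you can average over the whole law of $C_\ell$ with no window and no concentration bound, using only $C_\ell\ge\ell/2$ and $\mathbb{E}C_\ell=\ell$ through Jensen for $c\mapsto 1/(c-1)$.

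What your route buys:
\begin{itemize}
\item It is shorter and gives explicit constants, e.g.\ $\kappa=e^{-1/4}-e^{-1/2}$ for $\ell\ge 4$ with $\ell>1/x$.
\item It settles a point the paper glosses over. The paper's display $e^{-y}-p'_\ell(x,y)\ge C\,\mathbb{P}(C_\ell\in I_\ell)$ implicitly treats the contribution of $\{C_\ell>3\ell/2\}$ as nonnegative. But $e^{-y}-p_r(x,y)$ changes sign at $y=\log r/(r-1)$, which lies inside $(0,2/(3\ell))$ once $r$ is of order $\ell\log\ell$, an event of positive probability under the Poisson time change. This is easily repaired, since the negative part is at most $1/(r-1)$. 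Your identity shows the tail contribution is in fact nonnegative once integrated over the interval.
\end{itemize}

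What the paper's approach buys is locality in the time variable. It needs sharp control of $P_r(x,\cdot)$ only for typical $r\approx\ell$, plus concentration of $C_\ell$. Yours relies on a closed-form interval mass that has one sign for every $c>1$.
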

\begin{proof}
By the preceding theorem, $P'_{\ell}(x,\cdot)=\mathbb{E}\!\left[P_{C_{\ell}}(x,\cdot)\right]$
where $P_{\ell}(x,\cdot)$ is the transition kernel of the ordinary
SMC process. By convexity of total variation distance, 
\[
d_{\mathrm{TV}}\!\bigl(P'_{\ell}(x,\cdot),\pi\bigr)=d_{\mathrm{TV}}\!\bigl(\mathbb{E}P_{C_{\ell}}(x,\cdot),\pi\bigr)\le\mathbb{E}\!\left[d_{\mathrm{TV}}\!\bigl(P_{C_{\ell}}(x,\cdot),\pi\bigr)\right].
\]
Since $C_{\ell}\ge \ell/2$ almost surely, by Theorem \ref{thm:smc-tv-sharp} we have for sufficiently large 
$\ell$
\[
\mathbb{E}\left[d_{\mathrm{TV}}\!\bigl(P_{C_{\ell}}(x,\cdot),\pi\bigr)\right]\le\frac{2}{\ell}.
\]

We now prove a matching lower bound. Similar to the proof of \Thmref{smc-tv-sharp}, the strategy is to isolate the negative part of the density of $P_{C_{\ell}}(x,\cdot)-\pi$ and show that it has mass at least $\Omega(1/\ell)$.
Let 
\[
I_{\ell}:=\left[\frac{\ell}{2},\frac{3\ell}{2}\right],\qquad A_{\ell}:=\left(0,\frac{2}{3\ell}\right).
\]
We will show that for sufficiently large $\ell$, $P_{r}(x,\cdot)-\pi$ is negative on $A_{\ell}$ for all $r\in I_{\ell}$, and that the mass of this negative part is at least $\Omega(1/\ell)$ uniformly in $r\in I_{\ell}$. Then we will show that $\mathbb{P}(C_{\ell}\in I_{\ell})$ is bounded away from zero, which will yield the desired lower bound.

For the ordinary SMC kernel, when $r\neq1$ we have
\[
P_{r}(x,dy)=e^{-rx}\delta_{x}(dy)+p_{r}(x,y) dy,
\]
with 
\[
p_{r}(x,y)=\frac{r}{r-1}e^{-y}\Bigl(1-e^{-(r-1)\min(x,y)}\Bigr),
\]
and therefore 
\[
e^{-y}-p_{r}(x,y)=\frac{e^{-y}}{r-1}\Bigl(r e^{-(r-1)\min(x,y)}-1\Bigr).
\]

Since $x>0$ is fixed, there exists $\ell_{1}(x)$ such that
$A_{\ell}\subset(0,x)$ for all $\ell\ge\ell_{1}(x)$. Enlarging
$\ell_{1}(x)$ if necessary, we may also assume $\ell_{1}(x)\ge 4e$.
Fix $\ell\ge\ell_{1}(x)$, $r\in I_{\ell}$, and $y\in A_{\ell}$.
Then $y<x$, so
\[
e^{-y}-p_{r}(x,y)=\frac{e^{-y}}{r-1}\Bigl(r e^{-(r-1)y}-1\Bigr).
\]
Moreover,
\[
(r-1)y\le\frac{3\ell}{2}\cdot\frac{2}{3\ell}=1,
\]
so $e^{-(r-1)y}\ge e^{-1}$. Also, since $\ell\ge1$ and
$y\in A_{\ell}$, we have $y < 1$ and hence $e^{-y}\ge e^{-1}$. Therefore
\[
e^{-y}-p_{r}(x,y)\ge e^{-1}\cdot\frac{r e^{-1}-1}{r-1}.
\]
Since $r\in I_{\ell}$ and $\ell\ge\ell_{1}(x)\ge 4e$, we have
$r\ge\ell/2\ge 2e$, and therefore
\[
\frac{r e^{-1}-1}{r-1}\ge \frac{1}{2e},
\]
because
\[
\frac{r}{e}-1-\frac{r-1}{2e}=\frac{r-2e+1}{2e}\ge0.
\]
Hence
\[
e^{-y}-p_{r}(x,y)\ge \frac{1}{2e^{2}}=:C,
\qquad \ell\ge\ell_{1}(x),\ r\in I_{\ell},\ y\in A_{\ell}.
\]
Now let 
\[
p'_{\ell}(x,y):=\mathbb{E}\!\left[p_{C_{\ell}}(x,y)\right]
\]
denote the absolutely continuous part of the subordinated kernel.
Then for $\ell\ge\ell_{1}(x)$ and $y\in A_{\ell}$, 
\[
e^{-y}-p'_{\ell}(x,y)=\mathbb{E}\!\left[e^{-y}-p_{C_{\ell}}(x,y)\right]\ge C \mathbb{P}(C_{\ell}\in I_{\ell}).
\]
Therefore 
\begin{align*}
d_{\mathrm{TV}}\!\bigl(P'_{\ell}(x,\cdot),\pi\bigr)
&\ge \pi(A_{\ell})-P'_{\ell}(x,A_{\ell})\\
&=\int_{A_{\ell}}\bigl(\pi(y)-p'_{\ell}(x,y)\bigr) dy\\
&\ge\frac{2C}{3\ell} \mathbb{P}(C_{\ell}\in I_{\ell}).
\end{align*}
It remains to bound $\mathbb{P}(C_{\ell}\in I_{\ell})$ from below. By Chebyshev's inequality, 
\[
\mathbb{P}(C_{\ell}\notin I_{\ell})=\mathbb{P}\!\left(|C_{\ell}-\ell|>\frac{\ell}{2}\right)\le\frac{4\operatorname{Var}(C_{\ell})}{\ell^{2}}=\frac{4}{\ell}.
\]
Hence $\mathbb{P}(C_{\ell}\in I_{\ell})\to1$ and in particular there
exists $\ell_{3}$ such that 
\[
\mathbb{P}(C_{\ell}\in I_{\ell})\ge\frac{1}{2},\qquad\ell\ge\ell_{3}.
\]
Combining the last three displays yields 
\[
d_{\mathrm{TV}}\!\bigl(P'_{\ell}(x,\cdot),\pi\bigr)\ge\frac{C}{\ell}
\]
for some $C>0$ and all sufficiently large $\ell$.
\end{proof}

\begin{rem}[Implication for $n>2$]
Let $\mathcal{T}^{(n)}_{\ell}$ denote the local genealogy on $n$
labeled leaves, with stationary law $\Pi_{n}$. For any pair $(i,j)$,
the pairwise TMRCA satisfies
\[
\tau_{ij}(\ell)=g_{ij}(\mathcal{T}^{(n)}_{\ell})
\]
for a measurable map $g_{ij}$. Hence the data processing inequality gives
\[
d_{\mathrm{TV}}\!\bigl(\mathcal{L}(\tau_{ij}(\ell)\mid\mathcal{T}^{(n)}_{0}),\mathcal{L}_{\Pi_{n}}(\tau_{ij})\bigr)\le d_{\mathrm{TV}}\!\bigl(\mathcal{L}(\mathcal{T}^{(n)}_{\ell}\mid\mathcal{T}^{(n)}_{0}),\Pi_{n}\bigr),
\]
where $\mathcal{L}_{\Pi_{n}}(\tau_{ij})$ denotes the law of the pairwise TMRCA under the stationary distribution $\Pi_{n}$.
Since the left-hand side decays as $\asymp 1/\ell$ in the pairwise case,
we obtain the lower bound
\[
d_{\mathrm{TV}}\!\bigl(\mathcal{L}(\mathcal{T}^{(n)}_{\ell}\mid\mathcal{T}^{(n)}_{0}),\Pi_{n}\bigr)=\Omega\!\left(\frac{1}{\ell}\right).
\]
In particular, the full genealogy cannot converge to stationarity
faster than any fixed pairwise TMRCA. The same conclusion applies
to any statistic that depends on a fixed subset of leaves.
\end{rem}

\section{Discussion}
\label{sec:discussion}

In this paper, we studied the rate of forgetting for the pairwise
SMC and SMC' processes. For the embedded jump chains, we obtained explicit
geometric ergodicity bounds. For the continuous processes indexed by
genomic distance, by contrast, we showed that the total variation
distance from stationarity decays only as $1/\ell$.
The fact that correlations along the genome decay at a rate inversely
proportional to genetic distance is well known in population genetics.
Our contribution is to give a rigorous treatment of this phenomenon,
and to strengthen it by showing that this rate is sharp and that it
holds for total variation distance, not just for correlation decay.

These results have some practical consequences. Any procedure
that treats genealogies at separated loci as approximately independent must
respect the slow $1/\ell$ rate of decorrelation. Conversely, methods which sample based on
\emph{tree} distance (e.g.,~the number of recombination events separating
two loci) rather than genetic distance may be able to achieve faster
decorrelation. The caveat is that tree distance is not directly observable
and must be estimated from data, which introduces additional error.
Additionally, even when $n=2$, a randomly sampled \emph{tree} (as opposed to
the tree at a randomly sampled \emph{position}) does not
constitute a sample from the stationary distribution. For $n=2$, debiasing
the tree is straightforward, but for $n>2$ we are not aware of any existing procedure to do so.

The most obvious extension of these results is to the case $n>2$, where the state
space is more complicated and explicit formulas are not available.
The remark above shows that the full tree cannot converge faster
than $1/\ell$, since every pairwise TMRCA is a measurable function
of the tree. We conjecture that this lower bound is sharp, i.e.,
\[
d_{\mathrm{TV}}\!\bigl(\mathcal{L}(\mathcal{T}^{(n)}_{\ell}\mid\mathcal{T}^{(n)}_{0}),\Pi_{n}\bigr)\asymp\frac{1}{\ell},
\]
with constants depending on $n$. A proof would likely require quantitative
control of dependence between different pairwise coalescence times. Obtaining those is left to future work.

\section*{Acknowledgments}
This research was supported in part by the National Institute of General Medical Sciences of the NIH under award number R35GM151145.
The content is solely the responsibility of the authors and does not necessarily represent the official views of the NIH.
\printbibliography

\appendix

\section{Laplace transform and transition kernel of the SMC process}

In this section we derive the transition kernel of the SMC process by solving a PDE for the Laplace transform of the transition kernel. 

\begin{thm}
\label{thm:smc_density}The SMC process conditioned on $Y_{0}=x>0$
has Laplace transform $\phi(\ell,\lambda):=\mathbb{E}_{x}e^{-\lambda Y_{\ell}}$ given by
\begin{equation}
\phi(\ell,\lambda)=1-\frac{\lambda(1+\lambda+\ell)}{(1+\lambda)(\lambda+\ell)}\left(1-e^{-(\lambda+\ell)x}\right),
\quad\ell\ge0,\lambda>0,
\label{eq:phi}
\end{equation}
\end{thm}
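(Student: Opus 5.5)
We derive a first-order linear PDE for $\phi$ from Kolmogorov's backward or forward equation, solve it by the method of characteristics, and identify the solution with the displayed formula.

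\textbf{Forward equation.} With the generator $Af(y)=y\bigl(Jf(y)-f(y)\bigr)$ from (\ref{eq:generator}), apply Dynkin's formula to $f_\lambda(y)=e^{-\lambda y}$:
\[
\partial_{\ell}\phi=\mathbb{E}_{x}\bigl[Y_{\ell}\bigl(Jf_{\lambda}(Y_{\ell})-f_{\lambda}(Y_{\ell})\bigr)\bigr].
\]
Compute $Jf_\lambda(y)=\mathbb{E}e^{-\lambda(Uy+Z)}=\frac{1}{1+\lambda}\cdot\frac{1-e^{-\lambda y}}{\lambda y}$. Hence $y\,Jf_\lambda(y)=\frac{1-e^{-\lambda y}}{\lambda(1+\lambda)}$, and the term $y\,e^{-\lambda y}$ contributes $-\partial_\lambda\phi$. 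This gives the closed linear PDE
\[
\partial_{\ell}\phi-\partial_{\lambda}\phi=\frac{1-\phi}{\lambda(1+\lambda)},\qquad\phi(0,\lambda)=e^{-\lambda x}.
\]
Justifying Dynkin's formula needs $\mathbb{E}_x Y_\ell<\infty$. This follows from the comparison $\mathbb{E}[X_{n+1}\mid X_n]=X_n/2+1$ together with a standard bound on the number of jumps, or from the bounded rate on any truncation.

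\textbf{Characteristics.} Set $\psi=1-\phi$. Along the characteristics $\lambda=\lambda_0-\ell$, i.e.\ $\lambda+\ell=c$ is constant, we have
\[
\frac{d\psi}{d\ell}=-\frac{\psi}{\lambda(1+\lambda)}.
\]
Since $\frac{1}{\lambda(1+\lambda)}=\frac1\lambda-\frac1{1+\lambda}$ and $d\lambda=-d\ell$, we get $d\log\psi=d\log\frac{\lambda}{1+\lambda}$. Therefore $\psi\cdot\frac{1+\lambda}{\lambda}$ is constant along each characteristic. At $\ell=0$ we have $\lambda=c$, so
\[
\psi(\ell,\lambda)=\frac{\lambda}{1+\lambda}\cdot\frac{1+c}{c}\bigl(1-e^{-cx}\bigr),\qquad c=\lambda+\ell.
\]
This is exactly (\ref{eq:phi}). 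The characteristic starting at $\lambda_0=\lambda+\ell>\lambda>0$ stays in $\lambda>0$ on $[0,\ell]$, so every point $(\ell,\lambda)$ with $\lambda>0$ is reached.

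\textbf{Main obstacle.} The main difficulty is rigor in the PDE step. We need differentiability of $\phi$ in $\ell$ and the interchange of expectation with the generator. We also need uniqueness, which is automatic here: the PDE is linear first-order and its solution is determined along characteristics once $\phi$ is known to be $C^1$.

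To secure this, I would bypass generator-domain issues by conditioning on the first jump, giving the renewal equation
\[
\phi(\ell,\lambda)=e^{-\ell x}e^{-\lambda x}+\int_0^\ell xe^{-sx}\,\mathbb{E}[\phi_{Ux+Z}(\ell-s,\lambda)]\,ds.
\]
Alternatively, one can verify directly that the candidate formula satisfies the backward equation in $x$ with the correct initial condition, and invoke uniqueness of bounded solutions of the backward equation for a non-explosive jump process. Non-explosion holds since $\sum 1/X_n=\infty$ a.s. by positive recurrence.

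As a sanity check, $\ell=0$ gives $e^{-\lambda x}$, and $\ell\to\infty$ gives $1/(1+\lambda)$, the transform of $\pi$.
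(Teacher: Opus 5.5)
Your proposal is correct and follows the paper's route. Dynkin's formula applied to $f_\lambda(y)=e^{-\lambda y}$ gives the transport equation $\partial_\ell\phi-\partial_\lambda\phi=(1-\phi)/(\lambda(1+\lambda))$ with $\phi(0,\lambda)=e^{-\lambda x}$, and $\psi=1-\phi$ is then integrated along the characteristics $\lambda+\ell=\mathrm{const}$ back to $\ell=0$. Your rigor remarks go beyond the paper, which just invokes Dynkin's formula; note that $f_\lambda$ and $Af_\lambda$ are both bounded, so non-explosion already makes the Dynkin local martingale a true martingale, and the moment bound $\mathbb{E}_x Y_\ell<\infty$ is not actually needed.
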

\begin{proof}
Fix $\lambda>0$ and set $f_{\lambda}(y)=e^{-\lambda y}$. We have
\begin{align*}
\mathbb{E}[e^{-\lambda Z}] & =\int^{\infty}_{0}e^{-\lambda z}e^{-z} dz=\frac{1}{1+\lambda},\\
\mathbb{E}[e^{-\lambda Uy}] & =\int^{1}_{0}e^{-\lambda uy} du=\frac{1-e^{-\lambda y}}{\lambda y}.
\end{align*}
so that 
\begin{equation}
(Jf_{\lambda})(y)=\mathbb{E}\bigl[e^{-\lambda(Uy+Z)}\bigr]=\mathbb{E}\bigl[e^{-\lambda Uy}\bigr]\;\mathbb{E}\bigl[e^{-\lambda Z}\bigr]=\frac{1-e^{-\lambda y}}{\lambda y(1+\lambda)}.\label{eq:Jexp}
\end{equation}
Substituting (\ref{eq:Jexp}) into (\ref{eq:generator}) gives 
\begin{equation}
(Af_{\lambda})(y)=\frac{1-e^{-\lambda y}}{\lambda(1+\lambda)}-ye^{-\lambda y}.\label{eq:Lexp}
\end{equation}
By Dynkin's formula,
\[
\frac{\partial}{\partial\ell} \mathbb{E}_{x}[f(Y_{\ell})]=\mathbb{E}_{x}[(\mathcal{A}f)(Y_{\ell})].
\]
Applying this with $f=f_{\lambda}$ and using (\ref{eq:Lexp}), 
\begin{equation}
\frac{\partial}{\partial\ell}\phi(\ell,\lambda)=\frac{1}{\lambda(1+\lambda)}\mathbb{E}_{x}[1-e^{-\lambda Y_{\ell}}]-\mathbb{E}_{x}[Y_{\ell}e^{-\lambda Y_{\ell}}].\label{eq:phi-ell-deriv}
\end{equation}
The first expectation equals $1-\phi(\ell,\lambda)$. Additionally,
\begin{equation}
\mathbb{E}_{x}[Y_{\ell}e^{-\lambda Y_{\ell}}]=-\frac{\partial}{\partial\lambda}\mathbb{E}_{x}[e^{-\lambda Y_{\ell}}]=-\frac{\partial}{\partial\lambda}\phi(\ell,\lambda)\label{eq:eq:phi-lam-deriv}
\end{equation}
where the interchange of differentiation and integration is justified
by the bounded convergence theorem and the fact that $xe^{-\lambda x}\le1/(\lambda e)$.
Combining (\ref{eq:phi-ell-deriv}) and (\ref{eq:eq:phi-lam-deriv})
we obtain the transport equation 
\begin{equation}
\frac{\partial}{\partial\ell}\phi(\ell,\lambda)-\frac{\partial}{\partial\lambda}\phi(\ell,\lambda)=\frac{1}{\lambda(1+\lambda)}\bigl(1-\phi(\ell,\lambda)\bigr),\qquad\ell\ge0,\ \lambda>0,\label{eq:pde-phi}
\end{equation}
with initial condition $\phi(0,\lambda)=e^{-\lambda x}.$

To solve the PDE, we employ the method of characteristics. Firstly
change variables $\psi(\ell,\lambda):=1-\phi(\ell,\lambda)$ so that
(\ref{eq:pde-phi}) becomes 
\begin{equation}
\frac{\partial}{\partial\ell}\psi(\ell,\lambda)-\frac{\partial}{\partial\lambda}\psi(\ell,\lambda)=-\frac{1}{\lambda(1+\lambda)}\psi(\ell,\lambda),\qquad\psi(0,\lambda)=1-e^{-\lambda x}.\label{eq:pde-psi}
\end{equation}
Fix $(\ell,\lambda)$ and define for $s\in[0,\ell]$ the curves
\[
\ell(s)=s,\qquad\lambda(s)=\lambda+(\ell-s).
\]
Then $\ell(0)=0$, $\lambda(0)=\lambda+\ell$, and $\ell(\ell)=\ell$,
$\lambda(\ell)=\lambda$. By the chain rule, 
\[
\frac{d}{ds}\psi(\ell(s),\lambda(s))=\frac{\partial\psi}{\partial\ell}(\ell(s),\lambda(s))-\frac{\partial\psi}{\partial\lambda}(\ell(s),\lambda(s))
\]
so that by (\ref{eq:pde-psi}), 
\begin{equation}
\frac{d}{ds}\psi(\ell(s),\lambda(s))=-\frac{1}{\lambda(s)(1+\lambda(s))}\psi(\ell(s),\lambda(s)).\label{eq:ode}
\end{equation}
Since $\lambda>0\implies0<\phi<1\implies0<\psi<1$, we may divide
by $\psi$ and take logs: 
\[
\frac{d}{ds}\log\psi(\ell(s),\lambda(s))=-\frac{1}{\lambda(s)(1+\lambda(s))}.
\]
Integrating from $s=0$ to $s=\ell$ yields 
\[
\log\psi(\ell,\lambda)-\log\psi(0,\lambda+\ell)=-\int^{\ell}_{0}\frac{1}{\lambda(s)(1+\lambda(s))} ds.
\]
Now substitute $u=\lambda(s)=\lambda+\ell-s$, so $du=-ds$.
The limits become $u=\lambda+\ell$ at $s=0$ and $u=\lambda$
at $s=\ell$, and therefore 
\[
-\int^{\ell}_{0}\frac{1}{\lambda(s)(1+\lambda(s))} ds=\int^{\lambda}_{\lambda+\ell}\frac{1}{u(1+u)} du=\log\left(\frac{\lambda(1+\lambda+\ell)}{(1+\lambda)(\lambda+\ell)}\right).
\]
Exponentiating and using $\psi(0,z)=1-e^{-zx}$, we obtain equation
(\ref{eq:phi}).
\end{proof}

Inverting the Laplace transform, we can recover the conditional law
of $Y_{\ell}$ given in equations (\ref{eq:P_ell}) and (\ref{eq:p_c}).
Define 
\[
\phi_{\ell}(\lambda)=\phi(\ell,\lambda)=e^{-(\lambda+\ell)x}+\frac{\ell}{(1+\lambda)(\lambda+\ell)}\Bigl(1-e^{-(\lambda+\ell)x}\Bigr).
\]
The factor $e^{-(\lambda+\ell)x}=e^{-\ell x}e^{-\lambda x}$ implies an atom
at $x$ of weight $e^{-\ell x}$. For the absolutely continuous part,
let $F(\lambda)=\frac{\ell}{(1+\lambda)(\lambda+\ell)}=\mathcal{L}\{h(y)\}$
be the Laplace transform of a density $h$ to be determined later.
By the shift rule, 
\[
\mathcal{L}^{-1}\left\{ e^{-(\lambda+\ell)x}F(\lambda)\right\} =e^{-\ell x}h(y-x),\quad y>x.
\]
By partial fraction expansion, 
\[
\frac{1}{(1+\lambda)(\lambda+\ell)}=\frac{1}{\ell-1}\left(\frac{1}{\lambda+1}-\frac{1}{\lambda+\ell}\right)
\]
assuming $\ell\neq1$. Hence
\[
\mathcal{L}^{-1}\left\{ F(\lambda)\right\} =h(y)=\frac{\ell}{\ell-1}\bigl(e^{-y}-e^{-\ell y}\bigr),\quad y>0.
\]
Combining these expressions and simplifying yields the claim for $\ell\ne1.$
The $\ell=1$ case follows by taking limits.

\end{document}